\numberwithin{equation}{section}
\newcommand{\wto}{\stackrel{w}{\longrightarrow}} 
\newcommand{\vto}{\stackrel{v}{\longrightarrow}} 
\def\shift{B}
\newcommand\ind[1]{\mathbbm{1}{\left\{#1\right\}}}
\newcommandx{\norm}[2][1=]{\left|#2\right|_{#1}}
\newcommand{\EE}{\mathbb{E}}
\newcommand{\esp}{\mathbb{E}}
\newcommand{\Nset}{\mathbb{N}}
\newcommand{\PP}{\mathbb{P}} 
\newcommand{\pr}{\mathbb{P}} 
\newcommand{\Rset}{\mathbb{R}} 
\newcommand{\ZZ}{\mathbb{Z}} 
\newcommand{\Zset}{\mathbb{Z}} 
\newcommand{\setE}[1]{(\Rset^{#1})^\Zset}
\newcommand{\setEstar}[1]{\mathsf{E}^*_{#1}}
\newcommand{\setS}[1]{\mathsf{S}_{#1}}
\newcommand{\law}{\mathcal{L}}
\newcommand{\lpspace}[1]{\ell_{#1}}
\def\bzeta{\boldsymbol{\zeta}}
\def\spectralmeasure{\sigma}
\def\dtilde{\tilde{d}}
\def\proj{p}
\def\bszero{\boldsymbol{0}}
\newcommand{\bsx}{\boldsymbol{x}}
\newcommand{\bsy}{\boldsymbol{y}}
\newcommand{\bsX}{\boldsymbol{X}}
\newcommand{\bsY}{\boldsymbol{Y}}
\newcommand{\bsZ}{\boldsymbol{Z}}
\newcommand{\bstheta}{\boldsymbol{\theta}}
\newcommand{\bsTheta}{\boldsymbol{\Theta}}
\newcommand{\vbQ}{\boldsymbol{Q}}
\newcommand{\vbx}{\boldsymbol{x}}
\newcommand{\vby}{\boldsymbol{y}}
\newcommand{\vbX}{\boldsymbol{X}}
\newcommand{\vbY}{\boldsymbol{Y}}
\newcommand{\vbTheta}{\boldsymbol{\Theta}}
\def\PPC{N''}
 \def\mco{\mathcal{O}}
\def\mcm{\mathcal{M}}
\def\rmd{\mathrm{d}}
\def\rme{\mathrm{e}}
\newcommand{\lzero}{\ell_0}
\newcommand{\lo}{\tilde{\ell}_0}
\newcommand{\loo}{\tilde{\ell}_0\setminus\{\bszero\}}
\newcommandx{\sequ}[3][2=j,3=\mathbb{Z}]{\{#1_#2,#2\in#3\}}
\def\ie{i.e.}
\def\as{a.s.}
\def\wrt{w.r.t.}
\def\iid{i.i.d.}
\newtheorem{theorem}{Theorem}[section]
\newtheorem{lemma}[theorem]{Lemma}
\newtheorem{corollary}[theorem]{Corollary}
\newtheorem{proposition}[theorem]{Proposition}
\newtheorem{definition}[theorem]{Definition}
\theoremstyle{remark}
\newtheorem{remark}[theorem]{Remark}
\newtheorem{example}[theorem]{Example}
\crefname{hypothesis}{Assumption}{Assumptions}
\crefname{proposition}{Proposition}{Propositions}
\definecolor{mygray}{gray}{0.6}
\crefname{lemma}{Lemma}{Lemmas}
\begin{document}

\title{The tail process revisited} 

\author{Hrvoje Planini\'c\thanks{Department of Mathematics, University of Zagreb, Bijeni\v cka 30,
    Zagreb, Croatia}
     \and Philippe Soulier\thanks{Universit\'e Paris Nanterre, 92000 Nanterre, France}}

\maketitle

\begin{abstract}
  The tail measure of a regularly varying stationary time series has been recently introduced. It is
  used in this contribution to reconsider certain properties of the tail process and establish new
  ones. A new formulation of the time change formula is used to establish identities, some of which
  were indirectly known and some of which are new.
\end{abstract}

\section{Introduction}
Let $\sequ{\bsX}$ be a stationary regularly varying time series with values in $\Rset^d$. This means
that for all $s\leq t\in\Zset$ there exists a non-zero Radon measure $\nu_{s,t}$ on
$(\Rset^d)^{[s,t]}\setminus\{\bszero\}$ such that, as $u\to\infty$,
\begin{align}\label{eq:reg_var}
  \frac{\pr\left(u^{-1}\left(\bsX_s,\dots,\bsX_t\right) \in\cdot \right)}{\pr(\norm{\vbX_0}>u)}  \vto \nu_{s,t} \; , 
\end{align}
where $\vto$ denotes vague convergence.  Let $\norm\cdot$ denote an arbitrary norm on $\Rset^d$.
According to \cite[Theorem~2.1]{basrak:segers:2009}, this is equivalent to the existence of a
sequence $\sequ{\bsY}$ with $\pr(\norm{\vbY_0}>y)=y^{-\alpha}$ for $y\geq 1$ and such that for all
$s\leq t \in \Zset$, as $u\to\infty$,
\begin{align*}
  \law\left(u^{-1}\bsX_s,\dots,u^{-1}\bsX_t \mid
  \norm{\bsX_0}>u\right)  \wto \law(\bsY_s,\dots,\bsY_t)  \; , 
\end{align*}
where $\wto$ denotes weak convergence.  The sequence $\sequ{\bsY}$ is called the tail process of
$\sequ{\bsX}$. Furthermore, by \cite[Theorem~3.1]{basrak:segers:2009} the process $\sequ{\bsTheta}$
defined by $\bsTheta_j=\bsY_j/\norm{\bsY_0}$ is independent of $\norm{\bsY_0}$ and is called the
spectral tail process.

In the unpublished manuscript \cite{owada:samorodnitsky}, the tail measure of a regularly varying
time series was defined. It is the unique Borel measure $\nu$ on $\setE{d}$ with respect to the
product topology such that $\nu(\{\bszero\})=0$ and for all $s\leq t\in\Zset$
\begin{align*}
  \nu\circ \proj_{s,t}^{-1} = \nu_{s,t}
\end{align*}
on $(\Rset^d)^{[s,t]}\setminus\{\bszero\}$ where $\proj_{s,t}$ is the canonical projection of
$\setE{d}$ unto $(\Rset^d)^{[s,t]}$. It follows easily that the tail measure has the following
properties.
\begin{enumerate}[(i)]
\item \label{item:sigma-finite} $\nu$ is $\sigma$-finite;
\item \label{item:nu-shiftinvar} $\nu$ is shift invariant;
\item \label{item:nu-homogene} $\nu$ is homogeneous with index $-\alpha$, \ie\ $\nu(c \cdot) = c^{-\alpha} \nu$ for all $c>0$;
\item For every non negative measurable functional on $\setE{d}$,
  \begin{align}
    \esp[H(\bsY)] = \int_{\setE{d}} H(\bsy) \ind{\norm{\bsy_0}>1} \,\nu(\rmd \bsy) \; .  \label{eq:tail-tail}
  \end{align}
\end{enumerate}
The shift invariance of $\nu$ is a consequence of stationarity.  An alternate construction of the
tail measure (denoted $\mu^{\infty}$) was established in the more general framework of regular
variation on metric spaces in \cite{segers:zhao:meinguet:2017}.  Beyond its theoretical importance,
the tail measure is an extremely efficient tool to prove new results and give a much shorter
proof to known result.

The first such application will be in \Cref{sec:time-change} where we establish an alternative proof
of the time change formula (see (\ref{eq:time-shift-Theta})) which was first proved in
\cite{basrak:segers:2009} by using stationarity of the original time series and expressing the tail
process as a limit. Here, as in \cite{owada:samorodnitsky}, we will prove it using only the shift
invariance and homogeneity of the tail measure $\nu$.  Moreover, we will provide an equivalent
formulation of the formula (see \Cref{lem:time-shift-Y}) which turns out to be also very useful.

In \Cref{sec:tailprocesstozero}, we will restrict our attention to the case where the tail process
tends to zero at infinity. This property holds for most usual heavy tailed time series. It holds for
linear processes and for most Markov models of interest in time series such as GARCH-type processes
and solutions to stochastic recurrence equations.
Our first main result (\Cref{theo:tail-measure-infargmax}) will be that under this assumption, the
tail measure can be recovered from the spectral tail process conditioned to first achieve its
maximum at time zero. 

In \Cref{subsec:the-sequenceQ} we will introduce the sequence $\vbQ$ whose distribution is that of
the tail process standardized by its maximum, conditionally on the event that $|\bsY_0|$ is the
first exceedence of the tail process over 1. This sequence was introduced by
\cite{basrak:segers:2009} where it appears in the limiting theory for the point process of
exceedences and partial sums of the original time series and in the limits of the so-called cluster
functionals (which will be  introduced in \Cref{sec:emp-proc-clusters}). We will show that it can
also be used to recover the tail measure and is equivalent (in some sense to be made precise in
\Cref{prop:loi-Q}) to the spectral tail process conditioned to have its first maximum at time zero.

Cluster functionals have also been investigated in \cite{mikosch:wintenberger:2016} where, as a
consequence of using different techniques, expressions for their limits were obtained in terms of
the spectral tail process. These two different types of expressions for the limit of the same
quantities must therefore be equal but no direct proof of their equality had been given.  Moreover,
the sets of functionals for which limits have been obtained by one or the other method were not
equal. We will directly prove that these expressions are the same.
As a particular example, we will prove in \Cref{coro:equivalences} that
$\esp[(\sum_{j\in\Zset}|\vbQ_j|)^\alpha]<\infty$ if and only if
$\esp[(\sum_{j=0}^\infty|\vbTheta_j|)^{\alpha-1}]<\infty$.  This equivalence is of importance,
since those were the conditions under which limiting results were obtained in the literature, but it
was not known if these conditions were equivalent.

Previously, we will have analyzed in \Cref{subsec:extremal} the so-called candidate extremal index
$\vartheta = \pr(\sup_{j\geq1} \norm{\bsY_{j}}\leq1)$, introduced in \cite{basrak:segers:2009}, who
proved that it is positive under a condition on the original time series referred to as the
anticlustering condition (see~(\ref{eq:anticlustering})). Assuming only that
$\lim_{|j|\to\infty} |\bsY_j|=0$, we will prove that $\vartheta>0$. This is useful since the
ancticlustering condition, which is a standard assumption in the literature, is often much harder to
check than the convergence of the tail process to zero.

We will conclude \Cref{sec:tailprocesstozero} by extending and providing a very simple proof, based
on the tail measure,  of identities for quantities generalizing those introduced as cluster indices in
\cite{mikosch:wintenberger:2014}.

As already mentioned, the previous results are important in the context of limiting theory for heavy
tailed time series and are used to characterize the limits of cluster functionals. Such convergence
results were previously obtained by various methods and often by ad-hoc conditions for each
functional at hand. In \Cref{sec:emp-proc-clusters}, following \cite{basrak:planinic:soulier:2016},
we will consider clusters which are vectors of observations $(\vbX_1,\dots,\vbX_{r_n})$ of non
decreasing length $r_n$ as element of the space $\lo$ of shift equivalent sequences (see
\Cref{sec:emp-proc-clusters} for a precise definition).

In \cite{basrak:planinic:soulier:2016}, it is proved that the suitably normalized distribution of
the scaled clusters converge in the sense of $\mcm_0$ convergence of \cite{hult:lindskog:2006} under
the  anticlustering condition. We will prove in
\Cref{lem:anticlustering-useless} that if the tail process tends to zero at infinity, then the
cluster convergence mentioned above always holds for some sequence~$\{r_n\}$. This result also has
consequences for the convergence of the point process of clusters introduced in
\cite{basrak:planinic:soulier:2016} which generalizes the point process of exceedences and is a key
tool in the study of certain statistics and for establishing the (functional) convergence of the
partial sum process to a stable process when $\alpha\in(0,2)$.

We conclude the paper in \Cref{sec:maxstableprocesses} by recalling certain relations between
spectral tail processes and max-stable processes. In particular, when already given a non negative
process $\bsTheta$ satisfying the time change formula and $\lim_{|j|\to\infty} \Theta_j=0$, we
obtain an alternative construction, based on the tail meassure and \Cref{theo:tail-measure-infargmax},  
of a max-stable process whose spectral tail process is $\bsTheta$ to the one given recently in
\cite[Theorem~3.2]{janssen:2017}.

\paragraph{Notation} The following notation will be used throughout the paper. We use boldface letters for
vectors and sequences; for a sequence $\bsx=(\bsx_i)_{i\in\Zset} \in\setE{d}$, we write
$\bsx_{s,t}=(\bsx_s,\dots,\bsx_t)$, $\vbx^*_{s,t} = \max_{s\leq i \leq t} \norm{\bsx_i}$,
$\vbx^*= \max_{i\in\Zset} \norm{\bsx_i}$ and
$\norm[p]{\vbx} = (\sum_{j\in\Zset} \norm{\vbx_j}^p)^{1/p}$, $p>0$. 

Whenever convenient, we identity a vector $\vbx_{s,t}$ with $-\infty \leq s \leq t \leq +\infty$ to
an element of $(\Rset^d)^\Zset$ by completing it with zeros to the left if $s>-\infty$ or to the
right if $t<\infty$.

We consider the following
subspaces of $\setE{d}$: $\lzero=\{\vbx\in\setE{d}:\lim_{|j|\to\infty} \norm{\vbx_j}=0\}$ and for
$p>0$, $\lpspace{p}=\{\vbx\in\setE{d}:\norm[p]{\vbx}<\infty\}$.  

We denote by $\shift$ the backshift operator, \ie\ $(\shift\bsx)_j=\bsx_{j-1}$ and by $B^k$ its
$k$-th iterate for $k\in\Zset$.

A function $H:(\Rset^d)^\Zset\to \Rset$ is said to be homogeneous with degree $\alpha\in\Rset$ or
simply $\alpha$-homogeneous if $H(t\bsx) = t^\alpha H(\bsx)$ for all $\vbx\in (\Rset^d)^\Zset$ and
$t>0$, and it said to be shift invariant if $H(B\vbx)=H(\vbx)$ for all $\vbx\in (\Rset^d)^\Zset$. A
subset $A$ of $(\Rset^d)^\Zset$ is said to be homogeneous if $\bsx\in A$ implies $t\bsx\in A$ for
all $t>0$ and it is said to be shift invariant if $\bsx\in A$ if and only if $B\bsx\in A$.

\section{The time change formula}
\label{sec:time-change}
Since $\nu$ is homogeneous, it can be decomposed into ``radial and angular'' parts.  Define
$\setEstar{d} = \{\bsy\in\setE{d}:\norm{\bsy_0}>0\}$ and
$\setS{d} = \{\bsy\in\setE{d}:\norm{\bsy_0}=1\}$. Let $\psi$ be the map defined by
\begin{align*}
  \psi :(0,\infty) \times \setS{d} & \to \setEstar{d} \\
   (r,\bstheta) & \mapsto r\bstheta \; .
\end{align*}
Since $\nu(\{\bsy\in\setE{d}:\norm{\bsy_0}>1\}=1$, the measure
$\spectralmeasure$ defined by $\spectralmeasure=\nu(\{\bsy:|\bsy_0|>1,\; \bsy/|\bsy_0|\in \cdot\})$ is a probability
measure on $\setS{d}$ and by the homogeneity of $\nu$ it follows that (cf. \cite[Proposition 3.1,
Property (4)]{segers:zhao:meinguet:2017}
\[
\nu \circ \psi (dr,d\bstheta)=\alpha r^{-\alpha - 1} \rmd r \spectralmeasure (\rmd \bstheta) \; .
\]
Equivalently, if $H$ is a measurable $\nu$-integrable or nonnegative function on $\setEstar{d}$,
\begin{align*}
  \int_{\setEstar{d}} H(\bsy)\nu(\rmd\bsy) = \int_0^\infty \int_{\setS{d}} H(r\bstheta) 
  \spectralmeasure(\rmd \bstheta) \alpha r^{-\alpha-1} \rmd r \; .
\end{align*}
This formula can be extended to functions $H$ on $\setE{d}$ by adding the indicator
$\ind{\bsy_0\ne0}$, \ie
\begin{align}
  \int_{\setE{d}} H(\bsy) \ind{\bsy_0\ne0} \nu(\rmd\bsy) = \int_0^\infty \int_{\setS{d}} H(r\bstheta) 
  \spectralmeasure(\rmd \bstheta) \alpha r^{-\alpha-1} \rmd r \; .  \label{eq:polaire-tail-measure-0}
\end{align}
The indicator $\ind{\bsy_0\ne0}$ in the left hand side of~(\ref{eq:polaire-tail-measure-0}) cannot be
dispensed with since it is possible that $\nu(\{\bsy_0=0\})=\infty$.  If we denote by $\bsTheta$ a
random element on $\setE{d}$ with distribution $\spectralmeasure$, we obtain that $\vbTheta$ is the
spectral tail process of the time series $\{\vbX_j\}$, \ie\ 
\begin{align}
  \esp[H(\bsY)] & = \int_1^\infty \esp[ H(r\bsTheta)] \alpha r^{-\alpha-1} \rmd r \; . \label{eq:polar}
\end{align}

\begin{example}
  \label{xmpl:indicator-is-important}
  Let $\sequ{X}$ be a sequence of \iid\ nonnegative regularly varying random variables with tail
  index $\alpha>0$. Then the tail process and the spectral tail process are trivial:
  $Y_j=\Theta_j=0$ for all $j\ne0$. Consider the function $H(\bsy) = \ind{y_1>1}$. Then
  \begin{align*}
    \int_0^\infty \esp[ H(r\bsTheta)] \alpha r^{-\alpha-1} \rmd r = \int_0^\infty \esp[\ind{r\Theta_1>1}] \alpha r^{-\alpha-1} \rmd r = 0 \; .
  \end{align*}
  However, because of shift invariance,
  \begin{align*}
    \int_{\setE{d}} H(\bsy)  \nu(\rmd\bsy)  = \int_{\setE{d}} \ind{y_0>1} \nu(\rmd\bsy) = 1 \; .
  \end{align*}
  This illustrates the necessity of the indicator in the left hand side of
  (\ref{eq:polaire-tail-measure-0}).
\end{example}

We now obtain and prove a new version of the time change formula of \cite{basrak:segers:2009}.
\begin{lemma}
  \label{lem:time-shift-Y}
  Let $H$ be a non negative measurable functional on $\setE{d}$. Then, for all $k\in\Zset$ and $t>0$,
  \begin{align}
    \esp[H(\shift^k\bsY)\ind{\norm{\bsY_{-k}}>t}] = t^{-\alpha} \esp[H(t\bsY)\ind{\norm{\bsY_k}>1/t}] \;
    . \label{eq:time-shift-Y}
  \end{align}
\end{lemma}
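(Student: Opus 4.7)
The plan is to transfer both sides of \eqref{eq:time-shift-Y} into integrals against the tail measure $\nu$ via identity \eqref{eq:tail-tail}, and then reduce one to the other using only the shift invariance~(ii) and the $-\alpha$-homogeneity~(iii) of $\nu$. As a first step, applying \eqref{eq:tail-tail} to the functional $\bsy\mapsto H(\shift^k\bsy)\ind{\norm{\bsy_{-k}}>t}$ rewrites the left-hand side as
\[
\int_{\setE{d}} H(\shift^k\bsy)\,\ind{\norm{\bsy_{-k}}>t}\,\ind{\norm{\bsy_0}>1}\,\nu(\rmd\bsy).
\]

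Next, since $\nu$ is shift invariant, I would substitute $\bsy\leftarrow \shift^{-k}\bsy$ throughout the integrand without changing the value. Using $(\shift^{-k}\bsy)_j=\bsy_{j+k}$, this replaces $H(\shift^k\shift^{-k}\bsy)$ with $H(\bsy)$, $\norm{(\shift^{-k}\bsy)_{-k}}$ with $\norm{\bsy_0}$, and $\norm{(\shift^{-k}\bsy)_0}$ with $\norm{\bsy_k}$, so the integral becomes
\[
\int_{\setE{d}} H(\bsy)\,\ind{\norm{\bsy_0}>t}\,\ind{\norm{\bsy_k}>1}\,\nu(\rmd\bsy).
\]

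For the final step I would use the relation $\nu(c\,\cdot)=c^{-\alpha}\nu$: the change of variables $\bsy=t\bsz$ contributes a factor $t^{-\alpha}$ and turns the pair of indicators into $\ind{\norm{\bsz_0}>1}\ind{\norm{\bsz_k}>1/t}$, after which one more application of \eqref{eq:tail-tail} to the functional $\bsz\mapsto H(t\bsz)\ind{\norm{\bsz_k}>1/t}$ identifies the result as $t^{-\alpha}\esp[H(t\bsY)\ind{\norm{\bsY_k}>1/t}]$, which is the desired right-hand side. The only points requiring care are the bookkeeping of the coordinate shifts produced by $\shift^{\pm k}$ and the direction of the $t^{\pm\alpha}$ factor in the homogeneity change of variables; the nonnegativity of $H$ makes every step justified by Tonelli, and no integrability assumption on $H$ is needed.
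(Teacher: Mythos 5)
Your proposal is correct and follows essentially the same route as the paper's proof: rewrite the left-hand side via \eqref{eq:tail-tail}, apply the shift invariance of $\nu$ to move the shift from $H$ onto the indicators, apply the $-\alpha$-homogeneity to rescale by $t$, and apply \eqref{eq:tail-tail} again to recognize the right-hand side. The coordinate bookkeeping ($(\shift^{-k}\bsy)_j=\bsy_{j+k}$) and the direction of the factor $t^{-\alpha}$ both check out.
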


\begin{proof}
  Applying (\ref{eq:tail-tail}), the homogeneity and  shift invariance of $\nu$ yields
  \begin{align*}
    \esp[H(\shift^k\bsY)\ind{\norm{\bsY_{-k}}>t}] 
    & = \int_{\setE{d}} H(\shift^k\bsy) \ind{\norm{\bsy_{0}}>1} \ind{\norm{\bsy_{-k}}>t} \,\nu(\rmd\bsy)  \\
    & = \int_{\setE{d}} H(\bsy) \ind{\norm{\bsy_{k}}>1} \ind{\norm{\bsy_{0}}>t} \,\nu(\rmd\bsy)  \\
    & = t^{-\alpha} \int_{\setE{d}} H(t\bsx) \ind{\norm{\bsx_{k}}>1/t} \ind{\norm{\bsx_{0}}>1} \,\nu(\rmd\bsx)  \\
    & = t^{-\alpha} \esp[H(t\bsY)\ind{\norm{\bsY_k}>1/t}] \; .
  \end{align*}
\end{proof}

By an application of the polar decomposition (\ref{eq:polaire-tail-measure-0}), it is easily seen
that (\ref{eq:time-shift-Y}) is equivalent to the time change formula of \cite{basrak:segers:2009}:
\begin{align}
  \esp[H(\shift^k\bsTheta) \ind{\norm{\bsTheta_{-k}}\ne0}] = \esp[H(\norm{\bsTheta_k}^{-1}\bsTheta) \norm{\bsTheta_k}^\alpha] \;
  . \label{eq:time-shift-Theta}
\end{align}
where the quantity inside the expectation on the right hand side is understood to be $0$ when
$|\vbTheta_k|=0$.  A proof of this equivalence is in the appendix.

\begin{remark}
  Note that (\ref{eq:time-shift-Theta}) was proved in \cite[Theorem~3.1]{basrak:segers:2009} by
  using the definition of the tail process as a limit and therefore restricting it to continuous
  functions. The present proof is without such restriction and arguably more straightforward. 
\end{remark}

\begin{remark}
  If $H$ is homogeneous with degree $0$, then~(\ref{eq:time-shift-Theta}) yields for all
  $k\in\Zset$,
  \begin{align}
    \esp[H(\shift^k\bsTheta)\ind{\norm{\bsTheta_{-k}}\ne0}] = \esp[H(\bsTheta)\norm{\vbTheta_k}^\alpha] \; . \label{eq:time-shift-homo-0}
  \end{align}
  Conversely, by considering the function $\vbx\mapsto H(\norm{\vbx_k}^{-1}\vbx)$ it is easily
  seen that (\ref{eq:time-shift-homo-0}) is actually equivalent to (\ref{eq:time-shift-Theta}). If
  $H$ is homogeneous with degree $\alpha$, then~(\ref{eq:time-shift-Theta}) yields for all
  $k\in\Zset$,
  \begin{align}
    \esp[H(\shift^k\bsTheta)\ind{\norm{\bsTheta_{-k}}\ne0}] = \esp[H(\bsTheta)\ind{\norm{\bsTheta_k}\ne0}] \; . \label{eq:time-shift-homo-alpha}
  \end{align}
  If moreover $\sum_{i\in\Zset}\pr(\norm{\vbTheta_i}=0)=0$, then we obtain, for all $k\in\Zset$,
  \begin{align}
    \esp[H(\shift^k\bsTheta)] = \esp[H(\bsTheta)] \; . \label{eq:time-shift-Theta-positive-homo-alpha}
  \end{align}
  This property deceptively looks like stationarity, but it is only valid for functionals $H$ which
  are homogeneous with degree $\alpha$ and if $\pr(\norm{\vbTheta_k}=0)=0$ for all $k\in\Zset$.
\end{remark}

The shift invariance and homogeneity of $\nu$ allow to relate the null shift-invariant homogeneous
sets for $\nu$ and for the distribution of $\vbY$.

\begin{lemma}
  \label{lem:0-infty}
  Let $A$ be a shift invariant, homogeneous  measurable set in $\setE{d}$. Then
    $\nu(A)\in\{0,\infty\}$ and the following statements are equivalent: (i) $\nu(A)=0$; (ii)
  $\pr(\vbY\in A)=0$; (iii) $\pr(\vbTheta\in A)=0$.  
\end{lemma}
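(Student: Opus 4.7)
The plan is to dispatch the dichotomy $\nu(A) \in \{0,\infty\}$ and the implications (i)$\Rightarrow$(ii) and (ii)$\Leftrightarrow$(iii) directly from the definitions, then to focus on the converse direction (iii)$\Rightarrow$(i), where \Cref{xmpl:indicator-is-important} warns that the indicator $\ind{\vby_0 \ne 0}$ in the polar decomposition cannot be dispensed with.

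For the dichotomy, homogeneity of $A$ gives $cA = A$ for every $c>0$, while the $(-\alpha)$-homogeneity of $\nu$ yields $\nu(A) = \nu(cA) = c^{-\alpha}\nu(A)$; picking any $c\ne 1$ forces $\nu(A) \in \{0,\infty\}$. The implication (i)$\Rightarrow$(ii) is immediate from~(\ref{eq:tail-tail}), since $\pr(\vbY \in A) = \nu(A \cap \{\norm{\vby_0} > 1\}) \le \nu(A)$. For (ii)$\Leftrightarrow$(iii), I would observe that $\vbY = \norm{\vbY_0}\,\vbTheta$ with $\norm{\vbY_0} \ge 1$ almost surely, so homogeneity of $A$ gives the pointwise identity $\{\vbY \in A\} = \{\vbTheta \in A\}$, and the two probabilities coincide.

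The main step is (iii)$\Rightarrow$(i). The polar decomposition~(\ref{eq:polaire-tail-measure-0}) controls $\nu$ only on $\setEstar{d}$, so I would split $A = A_1 \cup A_2$ with $A_1 = A \cap \{\norm{\vby_0} > 0\}$ and $A_2 = A \cap \{\norm{\vby_0} = 0\}$. Applying~(\ref{eq:polaire-tail-measure-0}) to $H = \ind{\vby \in A}$ and using homogeneity of $A$ to cancel the $r$-dependence in $\ind{r\vbtheta \in A}$ turns the right-hand side into $\spectralmeasure(A) \int_0^\infty \alpha r^{-\alpha-1}\,\rmd r$; the (otherwise divergent) radial integral is annihilated by $\spectralmeasure(A) = \pr(\vbTheta \in A) = 0$, so $\nu(A_1) = 0$.

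For $A_2$, I would use shift invariance to relocate a non-zero coordinate back to position $0$. Since $\nu(\{\bszero\}) = 0$, the set $A_2$ is $\nu$-almost contained in $\bigcup_{k\ne 0}\{\vby \in A : \vby_0 = 0,\; \vby_k \ne 0\}$. For each fixed $k \ne 0$, shift invariance of both $\nu$ and $A$ identifies the $k$-th set with $\{\vbx \in A : \vbx_{-k} = 0,\; \vbx_0 \ne 0\}$, which is a subset of $A_1$ and therefore of $\nu$-measure zero. A countable union bound then gives $\nu(A_2) = 0$ and hence $\nu(A) = 0$. The bookkeeping of this last shift identification is the only mildly non-routine step; everything else follows directly from the tail-measure and polar representations already established in the excerpt.
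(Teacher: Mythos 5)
Your proof is correct and follows essentially the same route as the paper's: both arguments hinge on shift invariance to cover the $\{\norm{\vby_0}=0\}$ part of $A$ by shifted copies of $A\cap\{\norm{\vby_0}>0\}$, and on homogeneity to relate $\nu(A\cap\{\norm{\vby_0}>0\})$ to $\pr(\vbTheta\in A)$. The only (harmless) cosmetic differences are that you obtain the dichotomy directly from $\nu(A)=c^{-\alpha}\nu(A)$ and treat the radial part via the polar decomposition~(\ref{eq:polaire-tail-measure-0}), whereas the paper gets both at once from the scaling limit $\nu(A\cap\{\norm{\vby_0}>0\})=\lim_{\epsilon\to0}\epsilon^{-\alpha}\pr(\vbY\in A)$.
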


\begin{proof}
  Since $\nu(\{\bszero\})=0$, we have, by the shift invariance of $\nu$ and $A$,
  \begin{align}
    \nu(A \cap\{\norm{\vby_0}>0\}) \leq \nu(A) \leq \sum_{j\in\Zset} \nu(A \cap\{\norm{\vby_j}>0\}) 
    = \sum_{j\in\Zset} \nu(A \cap\{\norm{\vby_0}>0\}) \; . \label{eq:nu-decompose}
  \end{align}
  Applying the homogeneity and shift invariance of $\nu$ and $A$, the monotone convergence theorem
  and the definition of~$\vbY$, we obtain
  \begin{align*}
    \nu(A \cap\{\norm{\vby_0}>0\}) 
    & = \lim_{\epsilon\to 0} \nu(A \cap\{\norm{\vby_0}>\epsilon\})  \\
    & = \lim_{\epsilon\to 0} \epsilon^{-\alpha} \nu(A \cap\{\norm{\vby_0}>1\})  = \lim_{\epsilon\to 0} \epsilon^{-\alpha} \pr(\vbY\in A) \; . 
  \end{align*}
  This proves that $\nu(A \cap\{\norm{\vby_0}>0\}) =0$ if and only if $\pr(\vbY\in A) =0$ and that
  $\nu(A \cap\{\norm{\vby_0}>0\}) =\infty$ if $\nu(A \cap\{\norm{\vby_0}>0\}) >0$. It now follows
  from (\ref{eq:nu-decompose}) that $\nu(A)\in\{0,\infty\}$ and that the statements (i) and (ii) are
  equivalent. To finish the proof it just remains to notice that, since $A$ is homogeneous,
  $\pr(\vbTheta\in A)=\pr(\vbY\in A)$.
\end{proof}

\section{Properties of the tail process when $\lim_{j\to\infty} \norm{\vbY_j}=0$} 
\label{sec:tailprocesstozero}

In this section, we restrict our attention to tail processes which satisfy the following
condition.
\begin{align}
  \pr \left(  \lim_{|k|\to\infty} \norm{\bsY_k} = 0 \right) = 1 \; .
  \label{eq:Y_to_zero}
\end{align}
This condition is satisfied by most time series models of interest. It will be further discussed in
\Cref{sec:emp-proc-clusters}; here we simply admit it as our working assumption.  By
\Cref{lem:0-infty}, the property (\ref{eq:Y_to_zero}) means that the tail measure $\nu$ is supported
on the shift invariant and homogeneous set $\lzero=\{\lim_{|j|\to\infty} \norm{\vby_j}=0\}$.

\subsection{Recovering the tail measure}
An important consequence of~(\ref{eq:Y_to_zero}) is that \as\ $\vbY^*<\infty$ and there is a first
time index at which the maximum for the sequence $\vbY$ is achieved. To formalize this remark, we
introduce the infargmax functional $I$, defined on $(\Rset^d)^\Zset$ by
\begin{align*}
  I(\vby) = 
  \begin{cases}
    & j \in\Zset \mbox{ if } \vby_{-\infty,j-1}^* < \norm{\vby_j} \mbox{ and } \vby_{j+1,\infty}^*
    \leq \norm{\vby_j} \; , \\
    & -\infty \mbox{ if } \vby^* = \vby_{-\infty,j}^* \mbox{ for all } j \in \Zset \\ 
    & +\infty \mbox{ if }     \vby^* > \vby_{-\infty,j}^* \mbox{ for all } j \in \Zset \; .
  \end{cases}
\end{align*}
For instance, the infargmax of a constant sequence is $-\infty$. The infargmax is achieved in
$\Zset$ if there exists a first time when the maximum is achieved.  The event $I(\vby)\in\Zset$ can
be expressed as
\begin{align*}
  \sum_{j\in\Zset} \ind{I(\vby)=j}=1 \; .   
\end{align*}
By \Cref{lem:0-infty}, we  have
\begin{align*}
  \nu(\{I(\vby)\notin\Zset\})=0 \Longleftrightarrow \pr(I(\vbTheta)\in\Zset)=1 \; .
\end{align*}

\begin{theorem}
  \label{theo:tail-measure-infargmax}
  Assume that $\pr(I(\vbTheta)\in\Zset)=1$.  Then $\pr(I(\vbTheta)=0)>0$ and for all non negative
  measurable functions~$H$,
  \begin{align}
    \label{eq:tailmeasure-infargmax}
    \nu(H) =   \sum_{j\in\Zset} \int_0^\infty \esp[ H(r\shift^j\vbTheta) \ind{I(\vbTheta)=0}] \alpha r^{-\alpha-1} \rmd r \; .
  \end{align}
  If moreover $\sum_{j\in\Zset} \nu(\{\norm{\vby_j}=0\})=0$, or equivalently
  $\sum_{j\in\Zset}\pr(\norm{\vbTheta_j}=0)=0$, then
  \begin{align}
    \nu(H) =  \int_0^\infty \esp[H(r\vbTheta)] \alpha r^{-\alpha-1} \rmd r \; . \label{eq:tailmeasure-Theta-nonnul}
  \end{align}
\end{theorem}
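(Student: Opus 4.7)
The plan is to decompose $\nu$ according to the value of the infargmax functional $I$. First I observe that $\{I(\vby)\notin\Zset\}$ is shift-invariant and homogeneous, since $I(r\vby)=I(\vby)$ for $r>0$ and $I(B\vby)=I(\vby)+1$. Hence \Cref{lem:0-infty} together with the hypothesis $\pr(I(\vbTheta)\in\Zset)=1$ gives $\nu(\{I\notin\Zset\})=0$, so that $\nu(H)=\sum_{j\in\Zset}\nu(H\cdot\ind{I(\vby)=j})$. Using the identity $\ind{I(\vby)=j}=\ind{I(B^{-j}\vby)=0}$ together with shift invariance of $\nu$, each summand rewrites as $\nu((H\circ B^j)\cdot\ind{I(\vby)=0})$.

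The payoff is that $I(\vby)=0$ forces $|\vby_0|>\vby^*_{-\infty,-1}\geq 0$, so each piece lives on $\setEstar{d}$ and the polar decomposition \eqref{eq:polaire-tail-measure-0} applies directly. Combined with the $0$-homogeneity of $I$, this yields
\begin{align*}
  \nu(H\cdot\ind{I(\vby)=j}) = \int_0^\infty \esp\bigl[H(rB^j\vbTheta)\,\ind{I(\vbTheta)=0}\bigr]\,\alpha r^{-\alpha-1}\rmd r,
\end{align*}
and summing over $j$ produces \eqref{eq:tailmeasure-infargmax}. To obtain $\pr(I(\vbTheta)=0)>0$, I plug $H(\vby)=\ind{|\vby_0|>1}$ into \eqref{eq:tailmeasure-infargmax}: the left-hand side equals $\nu(\{|\vby_0|>1\})=1$ by \eqref{eq:tail-tail}, whereas the right-hand side, after the routine $r$-integration, collapses to $\esp[\sum_j|\vbTheta_j|^\alpha\,\ind{I(\vbTheta)=0}]$. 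Since $|\vbTheta_0|=1$ almost surely, the integrand dominates $\ind{I(\vbTheta)=0}$, so $\pr(I(\vbTheta)=0)=0$ would contradict the identity.

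For \eqref{eq:tailmeasure-Theta-nonnul}, it suffices to prove $\nu(\{|\vby_0|=0\})=0$, which then allows the indicator $\ind{|\vby_0|\ne 0}$ to be dropped from \eqref{eq:polaire-tail-measure-0}. Under the hypothesis, $\pr(|\vbTheta_j|=0)=\nu(\{|\vby_0|>1,|\vby_j|=0\})=0$ for every $j\in\Zset$; homogeneity upgrades this to $\nu(\{|\vby_0|>0,|\vby_j|=0\})=0$, and shift invariance then converts it to $\nu(\{|\vby_0|=0,|\vby_{-j}|>0\})=0$. The union over $j$ exhausts $\{|\vby_0|=0\}\setminus\{\bszero\}$, giving $\nu(\{|\vby_0|=0\})=0$. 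The stated equivalence $\sum_j\nu(\{|\vby_j|=0\})=0\Leftrightarrow\sum_j\pr(|\vbTheta_j|=0)=0$ drops out from shift invariance of $\nu$ on the left and the same upgrade argument on the right.

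The main delicate point is ensuring that the polar decomposition is applied only where $|\vby_0|>0$; the infargmax provides exactly the right partition to avoid the pitfall already illustrated by \Cref{xmpl:indicator-is-important}. No other step is more than bookkeeping once the decomposition $\nu=\sum_j\nu(\cdot\,\ind{I=j})$ is combined with shift invariance, homogeneity, and the polar form.
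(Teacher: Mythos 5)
Your proof is correct. The first half — showing $\nu(\{I\notin\Zset\})=0$ via \Cref{lem:0-infty}, decomposing $\nu$ over $\{I(\vby)=j\}$, transporting each piece to $\{I(\vby)=0\}$ by shift invariance, and then invoking the polar decomposition (legitimate because $I(\vby)=0$ forces $\norm{\vby_0}>0$), followed by the choice $H(\vby)=\ind{\norm{\vby_0}>1}$ to get $\pr(I(\vbTheta)=0)>0$ — is essentially identical to the paper's argument. Where you genuinely diverge is in the proof of~(\ref{eq:tailmeasure-Theta-nonnul}): the paper deduces it from~(\ref{eq:tailmeasure-infargmax}) by applying the time change formula~(\ref{eq:time-shift-Theta-positive-homo-alpha}) to the $\alpha$-homogeneous radial integral $L(\vby)=\int_0^\infty H(r\vby)\alpha r^{-\alpha-1}\rmd r$ and resumming the indicators $\ind{I(\vbTheta)=j}$, whereas you show directly that the hypothesis forces $\nu(\{\norm{\vby_0}=0\})=0$ (homogeneity upgrades $\nu(\{\norm{\vby_0}>1,\norm{\vby_j}=0\})=0$ to $\nu(\{\norm{\vby_0}>0,\norm{\vby_j}=0\})=0$, shift invariance flips it, and the countable union together with $\nu(\{\bszero\})=0$ covers $\{\norm{\vby_0}=0\}$), so the indicator in~(\ref{eq:polaire-tail-measure-0}) can simply be dropped. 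Your route is more elementary — it bypasses the time change formula entirely — and it makes visible that~(\ref{eq:tailmeasure-Theta-nonnul}) does not actually need the infargmax hypothesis $\pr(I(\vbTheta)\in\Zset)=1$, only the non-degeneracy condition $\sum_{j\in\Zset}\pr(\norm{\vbTheta_j}=0)=0$; the paper's route, by contrast, exhibits~(\ref{eq:tailmeasure-Theta-nonnul}) as a consequence of the general representation~(\ref{eq:tailmeasure-infargmax}) and illustrates the use of the time change formula that recurs throughout \Cref{sec:tailprocesstozero}. Your handling of the stated equivalence between the two forms of the hypothesis is also correct.
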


\begin{proof}
  Let $H$ be a non negative measurable function on $\setE{d}$. Since $\nu(\{I(\vby)\notin\Zset\})=0$
  by assumption, the shift invariance of $\nu$ yields
  \begin{align*}
    \nu(H) = \sum_{j\in\Zset} \int_{\setE{d}} H(\vby) \ind{I(\vby)=j} \nu(\rmd \vby)
    = \sum_{j\in\Zset} \int_{\setE{d}} H(\shift^j\vby)  \ind{I(\vby)=0} \nu(\rmd \vby) \; .
  \end{align*}
  Since $I(\vby)=0$ implies that $\norm{\vby_0}>0$, applying the polar
  decomposition~(\ref{eq:polaire-tail-measure-0}) to the function
  $\vby\mapsto\sum_{j\in\Zset} H(\shift^j\vby) \ind{I(\vby)=0} $
  yields~(\ref{eq:tailmeasure-infargmax}).

  In the case $\pr(\norm{\vbTheta_i}=0)=0$ for all $i\in\Zset$, we can apply the time change
  formula~(\ref{eq:time-shift-Theta-positive-homo-alpha}) to the function
  $L(\vby) = \int_0^\infty H(r\vby) \alpha r^{-\alpha-1} \rmd r$ which is
  homogeneous with degree $\alpha$ and we obtain
  \begin{align*}
    \nu(H) & = \sum_{j\in\Zset} \esp[L(\shift^j\vbTheta) \ind{I(\vbTheta)=0}] 
             = \sum_{j\in\Zset} \esp[L(\vbTheta) \ind{I(\shift^{-j}\vbTheta)=0}] \\ 
           & = \sum_{j\in\Zset} \esp[L(\vbTheta) \ind{I(\vbTheta)=j}]  = \esp[L(\vbTheta)] \; .
  \end{align*}
  This proves~(\ref{eq:tailmeasure-Theta-nonnul}).  Taking $H(\vby)=\ind{\norm{\vby_0}>1}$,
  (\ref{eq:tailmeasure-infargmax}) yields
  \begin{align}
    1 & = \nu(\{\norm{\vby_0}>1\}) \nonumber
        = \sum_{j\in\Zset} \int_0^\infty \pr( r\norm{\vbTheta_j}>1,I(\vbTheta)=0)  \alpha r^{-\alpha-1} \rmd r \\
      & = \sum_{j\in\Zset} \esp[ \norm{\vbTheta_j}^\alpha \ind{I(\vbTheta)=0}] \; . \label{eq:summability-infargmax}
  \end{align}
  This proves that $\pr(I(\vbTheta)=0)>0$. 
\end{proof}
Using the representation of the tail measure, we can further refine \Cref{lem:0-infty}.  

\begin{corollary}
  \label{coro:nul-infargmax}
  Assume that $\pr(I(\vbTheta)\in\Zset)=1$. If $A$ is shift invariant and homogeneous, then the
  following statements are equivalent
  \begin{enumerate}[(i)]
  \item \label{item:nuA0} $\nu(A)=0$;
  \item \label{item:thetaA0}  $\pr(\vbTheta \in A)=0$;
  \item \label{item:thetaA0-conditional}  $\pr(\vbTheta \in A \mid I(\vbTheta)=0)=0$.
  \end{enumerate}
\end{corollary}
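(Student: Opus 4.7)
The plan is to observe that the equivalence (i) $\Leftrightarrow$ (ii) is already content of \Cref{lem:0-infty}, so the only new content is to link either of these with (iii). I would do this by applying the representation of the tail measure from \Cref{theo:tail-measure-infargmax} directly to the indicator of the set $A$.

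Concretely, take $H(\vby) = \ind{\vby \in A}$ in~(\ref{eq:tailmeasure-infargmax}). Because $A$ is both shift invariant and homogeneous, one has $r \shift^j \vbTheta \in A$ if and only if $\vbTheta \in A$ for every $r>0$ and every $j\in\Zset$. Substituting this into~(\ref{eq:tailmeasure-infargmax}) gives
\begin{align*}
  \nu(A) = \sum_{j\in\Zset} \int_0^\infty \pr(\vbTheta \in A, \, I(\vbTheta)=0)\, \alpha r^{-\alpha-1}\,\rmd r,
\end{align*}
where the summand no longer depends on $j$ or on $r$. Since $\int_0^\infty \alpha r^{-\alpha-1}\,\rmd r = \infty$, the right-hand side is $0$ or $\infty$ according to whether $\pr(\vbTheta\in A,\, I(\vbTheta)=0)$ vanishes or is positive (with the usual convention $0\cdot\infty=0$). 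Thus $\nu(A)=0$ if and only if $\pr(\vbTheta\in A,\, I(\vbTheta)=0)=0$.

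To translate this into the conditional statement (iii), I would invoke the fact, also established in \Cref{theo:tail-measure-infargmax}, that $\pr(I(\vbTheta)=0)>0$. Then dividing by $\pr(I(\vbTheta)=0)$ yields $\pr(\vbTheta\in A,\, I(\vbTheta)=0)=0$ if and only if $\pr(\vbTheta\in A\mid I(\vbTheta)=0)=0$, closing the loop (i) $\Leftrightarrow$ (iii). Combined with \Cref{lem:0-infty}, this gives the full chain of equivalences.

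I do not foresee a significant obstacle here: the only subtlety is justifying the ``$0\cdot\infty=0$'' reading of the displayed identity, which is routine because both factors are nonnegative and the left-hand side $\nu(A)\in[0,\infty]$ is unambiguously defined. The positivity $\pr(I(\vbTheta)=0)>0$, which is precisely what makes the conditioning in (iii) meaningful, is supplied directly by \Cref{theo:tail-measure-infargmax} and so requires no additional argument.
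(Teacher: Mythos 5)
Your proof is correct and follows essentially the same route as the paper: the equivalence of (i) and (ii) is quoted from \Cref{lem:0-infty}, and the equivalence of (i) and (iii) is deduced from the representation~(\ref{eq:tailmeasure-infargmax}) applied to $\ind{A}$, exactly as the paper intends. You have merely spelled out the ``$0$ or $\infty$'' dichotomy and the role of $\pr(I(\vbTheta)=0)>0$ that the paper leaves implicit.
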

\begin{proof}
  The equivalence between~\ref{item:nuA0} and~\ref{item:thetaA0} was already stated in
  \Cref{lem:0-infty}. The equivalence between~\ref{item:nuA0} and~\ref{item:thetaA0-conditional} follows from~(\ref{eq:tailmeasure-infargmax}).  
\end{proof}
The equivalence \ref{item:thetaA0} and~\ref{item:thetaA0-conditional} is useful in practice since as
we will see right below it may be  easier to prove that an event has a probability zero
conditionally on the first maximum being achieved at time zero than unconditionally.

The following result has been proved in the related context of max-stable processes by
\cite{dombry:kabluchko:2016} and recently by \cite{janssen:2017}. We provide an alternate
straightforward proof.  See also.  \cite{janssen:2017}.
\begin{corollary}
  \label{lem:equivalence-limit-summability}
  The following statements are equivalent.
  \begin{enumerate}[(i)]
  \item \label{item:infargmax} $\pr(I(\vbTheta)\in\Zset)=1$;
  \item \label{item:limit} $\pr(\lim_{|j|\to\infty} \norm{\vbTheta_j}=0)=1$;
  \item \label{item:sum}  $\pr \left(\sum_{j\in\Zset} \norm{\vbTheta_j}^\alpha < \infty \right) = 1$.
  \end{enumerate}
\end{corollary}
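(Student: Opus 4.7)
The plan is to prove the three implications in the cycle (iii) $\Rightarrow$ (ii) $\Rightarrow$ (i) $\Rightarrow$ (iii), where the first two are elementary and the last uses the machinery developed in \Cref{theo:tail-measure-infargmax} together with \Cref{coro:nul-infargmax}.

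For (iii) $\Rightarrow$ (ii) there is nothing to do: if $\sum_{j\in\Zset}\norm{\vbTheta_j}^\alpha<\infty$ almost surely, then each summand must tend to $0$, hence $\norm{\vbTheta_j}\to 0$ as $|j|\to\infty$ almost surely. For (ii) $\Rightarrow$ (i) I would use that $\norm{\vbTheta_0}=1$ by construction, so if $\norm{\vbTheta_j}\to0$ at infinity then the supremum $\vbTheta^*\in[1,\infty)$ is attained at only finitely many indices, and hence there is a first such index, which by definition is $I(\vbTheta)\in\Zset$.

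The substantive direction is (i) $\Rightarrow$ (iii). Starting from (i), equation~(\ref{eq:summability-infargmax}) of \Cref{theo:tail-measure-infargmax} gives
\begin{align*}
  \sum_{j\in\Zset}\esp\bigl[\norm{\vbTheta_j}^\alpha\ind{I(\vbTheta)=0}\bigr]=1\;,
\end{align*}
so by monotone convergence $\esp\bigl[\sum_{j\in\Zset}\norm{\vbTheta_j}^\alpha\ind{I(\vbTheta)=0}\bigr]<\infty$. In particular, conditionally on $\{I(\vbTheta)=0\}$ the sum is almost surely finite, i.e.\ $\pr(\vbTheta\in A\mid I(\vbTheta)=0)=0$ where $A=\{\vby\in\setE{d}:\sum_{j\in\Zset}\norm{\vby_j}^\alpha=\infty\}$. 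The set $A$ is plainly measurable, shift invariant (re-indexing leaves the sum unchanged) and homogeneous with degree $\alpha>0$ (scaling $\vby$ by $t>0$ multiplies the sum by $t^\alpha$, which does not affect the property of being infinite). Since (i) is precisely the standing hypothesis of \Cref{coro:nul-infargmax}, that corollary yields $\pr(\vbTheta\in A)=0$, which is exactly (iii).

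I do not expect any serious obstacle: the only thing to verify carefully is that the set $A$ is shift invariant and homogeneous so that \Cref{coro:nul-infargmax} applies, and that (\ref{eq:summability-infargmax}) really does give an $L^1$ bound strong enough to conclude almost sure finiteness on $\{I(\vbTheta)=0\}$; both points are immediate.
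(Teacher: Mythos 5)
Your proposal is correct and follows essentially the same route as the paper: the implications (iii)$\implies$(ii)$\implies$(i) are dismissed as elementary, and the substantive direction (i)$\implies$(iii) is obtained from the identity~(\ref{eq:summability-infargmax}) together with \Cref{coro:nul-infargmax} applied to the shift invariant, homogeneous set where the sum diverges. The extra verifications you flag (shift invariance and homogeneity of that set) are indeed immediate and left implicit in the paper.
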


\begin{proof}
  The implications \ref{item:sum}$\implies$\ref{item:limit} and
  \ref{item:limit}$\implies$\ref{item:infargmax} are obvious.  We only need to prove the
  implication \ref{item:infargmax}$\implies$\ref{item:sum}.  By \Cref{theo:tail-measure-infargmax},
  \ref{item:infargmax} implies the identity~(\ref{eq:summability-infargmax}) which implies
  that
  \begin{align*}
    \pr \left(\sum_{j\in\Zset} \norm{\vbTheta_j}^\alpha < \infty \mid I(\vbTheta)=0\right) = 1 \; .
  \end{align*}
  By \Cref{coro:nul-infargmax}, this proves~\ref{item:sum}.
\end{proof}

\Cref{lem:equivalence-limit-summability} yields the following property which has been used in the
literature, but to the best of our knowledge, never proved.  If
$\lim_{|k|\to\infty} \norm{\vbY_k}=0$ and $\alpha\leq1$, then
\begin{align}
  \pr\left( \sum_{j\in\Zset} |\bsTheta_j|<\infty \right) = 1 \; .  \label{eq:summability-theta}
\end{align}

\subsection{The candidate extremal index}
\label{subsec:extremal}
Following \cite{basrak:segers:2009}, we define 
\begin{align}
  \vartheta = \pr\left(\sup_{j\geq1} \norm{\bsY_{j}}\leq1\right) \; . \label{eq:def-candidate-extremal}
\end{align}
In terms of the tail measure, we have
\begin{align}
  \vartheta = \nu(\{\bsy_{1,\infty}^*\leq1,\norm{\bsy_0}>1\}) \; . \label{eq:def-candidate-tailmeasure}
\end{align}
The candidate extremal index turns out to be the true extremal index of many time series models. The
relation between the candidate and true extremal index will be further developed in
\Cref{sec:emp-proc-clusters,sec:maxstableprocesses}.  Decomposing the event
$\{\sup_{j\geq 1}\norm{\bsY_{j}}>1\}$ according to the first time the tail process is greater than
1 and applying the time change formula (\ref{eq:time-shift-Y}) (with $t=1$), we obtain
\begin{align*}
  \pr \left(\sup_{j\geq 1}\norm{\bsY_{j}}>1\right)
  & = \sum_{k \geq 1} \pr\left(\max_{1\leq j\leq k-1}\norm{\bsY_{j}}\leq 1,\norm{\bsY_{k}}>1\right) \\
  & = \sum_{k \geq 1} \pr\left(\max_{-k+1\leq j\leq -1}\norm{\bsY_{j}}\leq 1,\norm{\bsY_{-k}}>1\right)
    = \pr\left(\sup_{j\leq -1}\norm{\bsY_{j}}>1\right) \; .
\end{align*}
Thus the candidate extremal index can be expressed as the probability that the first exceedence over
1 happens at time 0:  
\begin{align}
  \vartheta = \pr\left(\sup_{j\leq -1}\norm{\bsY_{j}}\leq1\right) \; .   \label{eq:altcandidate-BS09} 
\end{align}
This identity was obtained \cite{guivarch:lepage:2016} for solutions to stochastic recurrence
equations. For general time series, \cite{basrak:segers:2009} proved that
\eqref{eq:altcandidate-BS09} holds under the so-called anticlustering condition (see (\ref{eq:anticlustering}) below) by
using the original time series. Furthermore, it is also proved in the same manner in
\cite{basrak:segers:2009} that the anticlustering condition
implies that $\vartheta>0$. Since the candidate extremal index is defined in terms of the tail
process only, it is natural to give a proof using only the tail process under the assumption that
the tail process tends to zero.
\begin{lemma}
  \label{lem:candidat>0} 
  If $ \pr(\lim_{|k|\to\infty} \norm{\bsY_k} = 0)=1$, then $\vartheta>0$. 
\end{lemma}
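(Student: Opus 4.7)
The approach is to combine the alternate representation (\ref{eq:altcandidate-BS09}) with the polar decomposition of $\vbY$ and then invoke \Cref{theo:tail-measure-infargmax} via \Cref{lem:equivalence-limit-summability}. The key observation is that working from the $\sup_{j\leq -1}$ side of (\ref{eq:altcandidate-BS09}) is advantageous: on the event $\{I(\vbTheta)=0\}$ the definition of the infargmax yields the \emph{strict} inequality $\sup_{j\leq -1}\norm{\vbTheta_j} < \norm{\vbTheta_0}=1$, whereas on the $\sup_{j\geq 1}$ side one only gets a weak inequality.

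First I would write $\vbY_j = Y\vbTheta_j$, where $Y=\norm{\vbY_0}$ is Pareto$(\alpha)$ on $[1,\infty)$ and independent of $\vbTheta$. Setting $M := \sup_{j\leq -1}\norm{\vbTheta_j}$, the identity (\ref{eq:altcandidate-BS09}) gives
\begin{align*}
  \vartheta = \pr\left(\sup_{j\leq -1}\norm{\vbY_j}\leq 1\right) = \pr(YM \leq 1).
\end{align*}
Conditioning on $\vbTheta$ and using $\pr(Y>y)=y^{-\alpha}$ for $y\geq 1$, a short case split on $M=0$, $0<M\leq 1$, and $M>1$ yields
\begin{align*}
  \vartheta = \esp\bigl[(1-M^\alpha)_+\bigr].
\end{align*}

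It therefore suffices to prove $\pr(M<1)>0$. Under the standing hypothesis, \Cref{lem:equivalence-limit-summability} gives $\pr(I(\vbTheta)\in\ZZ)=1$, and then \Cref{theo:tail-measure-infargmax} delivers $\pr(I(\vbTheta)=0)>0$. By definition of the infargmax, $\{I(\vbTheta)=0\}\subseteq\{M<1\}$, so $\pr(M<1)\geq \pr(I(\vbTheta)=0)>0$; consequently $(1-M^\alpha)_+>0$ on a set of positive probability and $\vartheta>0$.

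I do not expect a serious obstacle: the only real decision is to start from (\ref{eq:altcandidate-BS09}) rather than from (\ref{eq:def-candidate-extremal}) directly, so that the strict-inequality half of the infargmax condition becomes the usable one. The alternative route through (\ref{eq:def-candidate-extremal}) or a direct polar expansion from \Cref{theo:tail-measure-infargmax} is less clean because on $\{I(\vbTheta)=0\}$ the quantity $\sup_{j\geq 1}\norm{\vbTheta_j}$ is only bounded by $1$ weakly, which need not make the resulting integrand strictly positive.
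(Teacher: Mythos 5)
Your proof is correct, but it follows a genuinely different route from the paper's. The paper's own argument is a two-line computation with the time change formula (\ref{eq:time-shift-Y}): since $\pr(\lim_{|k|\to\infty}\norm{\vbY_k}=0)=1$, the event $\{\vbY_{0,\infty}^*>1\}$ (which has probability one because $\norm{\vbY_0}>1$ a.s.) decomposes according to the \emph{last} exceedance time $j\geq 0$, each term $\pr(\vbY_{j+1,\infty}^*\leq 1,\norm{\vbY_j}>1)$ is shifted to $\pr(\vbY_{1,\infty}^*\leq1,\norm{\vbY_{-j}}>1)\leq\vartheta$, and summing gives $1\leq\infty\times\vartheta$, forcing $\vartheta>0$. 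You instead start from the already-derived identity (\ref{eq:altcandidate-BS09}), use the independence of $\norm{\vbY_0}$ and $\vbTheta$ to obtain the closed form $\vartheta=\esp[(1-M^\alpha)_+]$ with $M=\sup_{j\leq-1}\norm{\vbTheta_j}$, and then import $\pr(I(\vbTheta)=0)>0$ from \Cref{theo:tail-measure-infargmax} (via \Cref{lem:equivalence-limit-summability}), noting that $I(\vbTheta)=0$ forces the \emph{strict} inequality $M<1$. All the ingredients you invoke are established before the lemma and none of them depends on it, so there is no circularity; your observation that the backward-sup side of (\ref{eq:altcandidate-BS09}) is the right one to use (because the infargmax definition is strict on the left and weak on the right) is exactly the point that makes your argument close. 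The trade-off: the paper's proof is more elementary and self-contained, needing only the time change formula and the existence of a last exceedance, while yours leans on the heavier tail-measure machinery of \Cref{theo:tail-measure-infargmax} but yields the explicit expression $\vartheta=\esp[(1-(\vbTheta^*_{-\infty,-1})^\alpha)_+]$ as a by-product, in the same spirit as the identities (\ref{eq:candidate-infargmax})--(\ref{eq:candidate-hashorva}) collected later in the section.
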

\begin{proof}
  Since $\pr(\norm{\vbY_0}>1)=1$ and by assumption there is always a last time when the tail process
  is bigger than $1$, applying the time change formula~(\ref{eq:time-shift-Y}), we have 
  \begin{align*}
    1 & = \pr(\vbY_{0,\infty}^*>1)= \sum_{j\geq 0} \pr(\vbY_{j+1,\infty}^*\leq1,\norm{\vbY_j}>1) \\
      & = \sum_{j\geq 0} \pr(\vbY_{1,\infty}^*\leq1,\norm{\vbY_{-j}}>1) 
        \leq \sum_{j\geq0} \pr(\vbY_{1,\infty}^*\leq1)  = \infty \times \vartheta  \; .
  \end{align*}
  This implies that $\vartheta>0$.
\end{proof}

\subsection{The sequence $\vbQ$}
\label{subsec:the-sequenceQ}

From now on, we assume that $\pr(\lim_{|j|\to\infty} |\vbY_j|=0)=1$, which ensures that
$\vartheta>0$. 
\begin{definition}[\cite{basrak:segers:2009}]
  \label{def:def-sequenceQ}
  The sequence $\vbQ=\{\vbQ_j,j\in\Zset\}$ is a random sequence whose distribution is that of
  $(\bsY^*)^{-1}\bsY$ (or $(\vbTheta^*)^{-1}\vbTheta$) conditionally on \mbox{$\vbY_{-\infty,-1}^*\leq1$}.
\end{definition}
The sequence $\vbQ$ appears in limits of so-called cluster functionals.  This will be further
developed in \Cref{sec:emp-proc-clusters}. Here we study it formally.  We first show that this
sequence is closely related to the sequence $\vbTheta$ conditioned to have its first maximum at $0$.

\begin{proposition}
  \label{prop:loi-Q}
  Assume that $\pr(\lim_{|k|\to\infty} \norm{\vbY_k}=0)=1$.
  Let $H$ be a shift invariant, non negative measurable function on $\setE{d}$. Then
  \begin{align}
    \vartheta \esp[H(\vbQ)] = \esp[H(\vbTheta)\ind{I(\vbTheta)=0}] \; .  \label{eq:identity-Q-infargmax}
  \end{align}
\end{proposition}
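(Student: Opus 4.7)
The plan is to reduce both sides of~(\ref{eq:identity-Q-infargmax}) to integrals against the tail measure $\nu$ and identify them using shift invariance. First, combining the definition of $\vbQ$, the identity $\vartheta = \pr(\vbY^*_{-\infty,-1}\leq 1)$ from~(\ref{eq:altcandidate-BS09}), the a.s.\ finiteness of $\vbY^*$ under~(\ref{eq:Y_to_zero}), and the tail-measure representation~(\ref{eq:tail-tail}), one writes
\begin{align*}
\vartheta \, \esp[H(\vbQ)]
= \esp[H(\vbY/\vbY^*) \ind{\vbY^*_{-\infty,-1}\leq 1}]
= \int_{\setE{d}} H(\vby/\vby^*) \ind{\vby^*_{-\infty,-1}\leq 1,\,\norm{\vby_0}>1}\,\nu(\rmd\vby) \; .
\end{align*}
The integrand $\Phi(\vby) := H(\vby/\vby^*)$ is shift invariant since $H$ is and $\vby^*$ is. For $\nu$-a.e.\ $\vby$ (which lies in $\lzero$), on $\{\vby^*>1\}$ both the infargmax $I(\vby)$ and the first-exceedance time $F(\vby) := \inf\{j\in\Zset : \norm{\vby_j}>1\}$ are well-defined integers, and the event above is exactly $\{F(\vby)=0\}$.

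Next, I would exploit the shift invariance of $\nu$ together with the equivariances $F(\shift^c\vby)=F(\vby)+c$ and $I(\shift^c\vby)=I(\vby)+c$: for every $k\in\Zset$, substituting $\vby \mapsto \shift^k\vby$ yields
\begin{align*}
\int \Phi(\vby)\ind{F(\vby)=0,\,I(\vby)=k}\,\nu(\rmd\vby) = \int \Phi(\vby)\ind{F(\vby)=-k,\,I(\vby)=0}\,\nu(\rmd\vby) \; .
\end{align*}
On $\{F(\vby)=0\}$ we have $\vby^*_{-\infty,-1}\leq 1 < \norm{\vby_0}\leq \vby^*$, forcing $I(\vby)\geq 0$. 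Summing the display over $k\geq 0$ and using $\sum_{k\geq 0}\ind{F=-k,I=0} = \ind{I=0,\,F\leq 0} = \ind{I=0,\,\norm{\vby_0}>1}$ (since $\vby^*=\norm{\vby_0}$ on $\{I=0\}$), one obtains
\begin{align*}
\vartheta \, \esp[H(\vbQ)] = \int_{\setE{d}} H(\vby/\norm{\vby_0})\ind{I(\vby)=0,\,\norm{\vby_0}>1}\,\nu(\rmd\vby) \; .
\end{align*}

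Finally, I would apply the polar decomposition~(\ref{eq:polaire-tail-measure-0}) to $G(\vby) := H(\vby/\norm{\vby_0})\ind{I(\vby)=0,\norm{\vby_0}>1}$. Because $\norm{\vbTheta_0}=1$ a.s.\ and $I(r\vbtheta)=I(\vbtheta)$ for every $r>0$, one has $G(r\vbTheta) = H(\vbTheta)\ind{I(\vbTheta)=0}\ind{r>1}$, whence
\begin{align*}
\vartheta \, \esp[H(\vbQ)] = \esp[H(\vbTheta)\ind{I(\vbTheta)=0}] \int_1^\infty \alpha r^{-\alpha-1}\,\rmd r = \esp[H(\vbTheta)\ind{I(\vbTheta)=0}] \; ,
\end{align*}
which is the claim. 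The main subtlety is the shift-and-sum step: it converts the ``first-exceedance-at-time-$0$'' representative of each shift orbit (which is what $\vbQ$ encodes) into the ``first-maximum-at-time-$0$'' representative (which is what $\vbTheta$ conditioned on $\{I=0\}$ encodes); the bookkeeping requires $I$ to be $\nu$-a.e.\ integer-valued, a fact guaranteed by \Cref{lem:equivalence-limit-summability}.
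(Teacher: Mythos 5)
Your proof is correct and follows essentially the same route as the paper's: both decompose the event $\{\vbY_{-\infty,-1}^*\leq 1\}$ (equivalently $\{F=0\}$) over the value of the infargmax and use shift invariance to re-index so that the infargmax sits at time $0$ — the paper does this via the time change formula~(\ref{eq:time-shift-Y}) with $t=1$ applied to $\vbY$, which is itself just the shift invariance of $\nu$ that you invoke directly. Your finishing step via the polar decomposition~(\ref{eq:polaire-tail-measure-0}) is a cosmetic variant of the paper's observation that $\vbY/\vbY^*=\vbTheta$ on $\{I(\vbY)=0\}$.
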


\begin{proof}
  Let $K$ be a non negative measurable shift invariant functional.  Applying the time change
  formula (\ref{eq:time-shift-Y}) yields
  \begin{align}
    \esp[K(\vbY)  \ind{\vbY_{-\infty,-1}^*\leq1}] 
    & = \sum_{j\in\Zset} \esp[K(\vbY) \ind{\vbY_{-\infty,-1}^*\leq1}\ind{I(\vbY)=j}] \nonumber  \\
    & = \sum_{j\in\Zset} \esp[K(\vbY) \ind{\vbY_{-\infty,-1}^*\leq1}\ind{I(\vbY)=j}\ind{\norm{\vbY_j}>1}] \nonumber  \\
    & = \sum_{j\in\Zset} \esp[K(\vbY) \ind{\vbY_{-\infty,-j-1}^*\leq1}\ind{I(\shift^j\vbY)=j}\ind{\norm{\vbY_{-j}}>1}] \nonumber  \\
    & = \esp[K(\vbY) \ind{I(\vbY)=0} \sum_{j\in\Zset} \ind{\vbY_{-\infty,-j-1}^*\leq1} \ind{\norm{\vbY_{-j}}>1}] \nonumber \\
    & = \esp[K(\vbY) \ind{I(\vbY)=0}] \; .  \label{eq:identity-Y/Y*}
  \end{align}
  Applying this identity to the function $K(\vby) = H(\vby/\vby^*)$ 
  yields~(\ref{eq:identity-Q-infargmax}).
\end{proof}

Let $H$ be a shift invariant non negative measurable functional on $\setE{d}$. Applying the
identity~(\ref{eq:identity-Y/Y*}) we obtain for $t\geq1$, 
\begin{align}
  \esp[H(\vbY/\vbY^*)\ind{\vbY^*>t}\mid \vbY_{-\infty,-1}^*\leq1] 
  & = \vartheta^{-1}   \esp[H(\vbTheta) \ind{\norm{\vbY_0}>t}  \ind{I(\vbTheta)=0} ] \nonumber \\
  & = \vartheta^{-1}   t^{-\alpha} \esp[H(\vbTheta)   \ind{I(\vbTheta)=0} ] \nonumber \\
  & = t^{-\alpha}      \esp[H(\vbQ)] \; . \label{eq:identity-Y-Ystar}
\end{align} 
The previous result implies that conditionally on $\vbY_{-\infty,-1}^*\leq1$, $\vbY^*$ has the same
Pareto distribution as $\norm{\vbY_0}$ (unconditionally), but it does not imply that $\vbY^*$ and
$(\vbY^*)^{-1}\vbY$ are independent, since (\ref{eq:identity-Y-Ystar}) holds only for shift
invariant functionals.  However, the latter statement is true if one considers $(\vbY^*)^{-1}\vbY$
as a random element in the space $\lo$ of shift-equivalent sequences. This will be further developed
in \Cref{sec:emp-proc-clusters}.  These results were originally proved in \cite{basrak:tafro:2015}
by different means under the anticlustering condition (\ref{eq:anticlustering}).  The present proof,
which assumes only $\pr(\lim_{|j|\to\infty} |\vbY_j|=0)=1$, is  simpler and moreover shows that
these results are direct consequences of the fact that $\norm{\vbY_0}$ is Pareto distributed and
independent of the spectral tail process.

Anoter important consequence of \Cref{prop:loi-Q} is that the tail measure can also be recovered
from the sequence $\vbQ$. For a non negative measurable $H$, by \Cref{theo:tail-measure-infargmax}
we have
\begin{align}
  \nu(H) = \vartheta \sum_{j\in\Zset} \int_0^\infty \esp[ H(r\shift^j\vbQ)] \alpha r^{-\alpha-1} \rmd r \; .    \label{eq:tailmeasure-Q}
\end{align}

Applying (\ref{eq:tailmeasure-Q}) to the function $H:\vbx\mapsto \ind{|\vbx_0|>1}$ yields
\begin{align}
  1 = \nu(H) =   \vartheta \sum_{j\in\Zset} \esp[\norm{\vbQ_j}^\alpha]  \; .  \label{eq:sumQtheta-egal1}
\end{align}
The inequality $ \vartheta \sum_{j\in\Zset} \esp[\norm{\vbQ_j}^\alpha] \leq1$ was obtained in
\cite[Theorem~2.6]{davis:hsing:1995} by an application of Fatou's lemma. It was also stated there
that equality holds under an additional uniform integrability assumption. We have thus proved that
(\ref{eq:sumQtheta-egal1}) holds without any additional assumption and this seems to be new.

We now prove more identities between quantities expressed in terms of the spectral tail process or
in terms of the sequence $\vbQ$. The equality of some of these quantities was already indirectly
known, since they appeared as limits of the same quantities, but obtained through different
methods. For some of them, the case $\alpha=1$ had not yet been treated.  We first prove an identity
which will be the main path from $\vbQ$ to $\vbTheta$.

\begin{lemma}
  \label{lem:newidentity}
  Assume that $\pr(\lim_{|j|\to\infty}\norm{\vbY_j}=0)=1$.  Let $H$ be a non negative, shift invariant, 
  $\alpha$-homogeneous measurable function on~$\setE{d}$. Then,
  \begin{align}
    \vartheta \esp[H(\vbQ)] = \esp[H(\vbTheta)\ind{I(\vbTheta)=0}] 
    = \esp \left[ \frac{H(\vbTheta)}{\norm[\alpha]{\vbTheta}^\alpha}\right] \; . \label{eq:newidentity}
  \end{align}

\end{lemma}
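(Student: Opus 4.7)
The first equality will be immediate from Proposition~\ref{prop:loi-Q} applied to the shift invariant $H$; all the work lies in the second equality. My plan is to absorb the indicator $\ind{I(\vbTheta)=0}$ by multiplying and dividing by $\norm[\alpha]{\vbTheta}^\alpha$, then redistribute the resulting mass via the time change formula for $0$-homogeneous functions \eqref{eq:time-shift-homo-0}.

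Concretely, I will set $K(\vbx) := H(\vbx)/\norm[\alpha]{\vbx}^\alpha$. Since $H$ and $\vbx\mapsto\norm[\alpha]{\vbx}^\alpha=\sum_j\norm{\vbx_j}^\alpha$ are both $\alpha$-homogeneous and shift invariant, $K$ will be $0$-homogeneous and shift invariant, and well-defined on the support of $\vbTheta$ because $\norm{\vbTheta_0}=1$ a.s.\ forces $\norm[\alpha]{\vbTheta}^\alpha\geq 1$. Tonelli then gives
\begin{align*}
\esp[H(\vbTheta)\ind{I(\vbTheta)=0}] = \sum_{j\in\Zset} \esp\bigl[K(\vbTheta)\ind{I(\vbTheta)=0}\norm{\vbTheta_j}^\alpha\bigr] \, .
\end{align*}
Next I would apply \eqref{eq:time-shift-homo-0} with the $0$-homogeneous integrand $\tilde K(\vbx):=K(\vbx)\ind{I(\vbx)=0}$---$0$-homogeneous because positive rescaling preserves the infargmax, i.e.\ $I(t\vbx)=I(\vbx)$ for $t>0$---and with $k=j$, rewriting each summand as $\esp[\tilde K(\shift^j\vbTheta)\ind{\norm{\vbTheta_{-j}}\neq 0}]$.

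To finish, I would use shift invariance of $K$ together with the identity $I(\shift^j\vbx)=I(\vbx)+j$, and note that on $\{I(\vbTheta)=-j\}$ one has $\norm{\vbTheta_{-j}}\geq\norm{\vbTheta_0}=1$ so the leftover indicator is redundant; each summand collapses to $\esp[K(\vbTheta)\ind{I(\vbTheta)=-j}]$. Summing over $j\in\Zset$ and invoking $\pr(I(\vbTheta)\in\Zset)=1$---which follows from the standing assumption via Corollary~\ref{lem:equivalence-limit-summability}---reduces the sum to $\esp[K(\vbTheta)]=\esp[H(\vbTheta)/\norm[\alpha]{\vbTheta}^\alpha]$, which is the desired identity. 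The main bookkeeping obstacle will be the homogeneity-degree check, in particular verifying that the indicator-multiplied function $\tilde K$ remains $0$-homogeneous so that \eqref{eq:time-shift-homo-0} legitimately applies; beyond that, no interchange or integrability issue arises since all integrands are non-negative.
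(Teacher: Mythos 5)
Your proposal is correct and follows essentially the same route as the paper: the first equality from Proposition~\ref{prop:loi-Q}, then writing $1=\sum_j\norm{\vbTheta_j}^\alpha/\norm[\alpha]{\vbTheta}^\alpha$ and applying the $0$-homogeneous time change formula \eqref{eq:time-shift-homo-0} to $\vbx\mapsto H(\vbx)\norm[\alpha]{\vbx}^{-\alpha}\ind{I(\vbx)=0}$, using $I(\shift^j\vbx)=I(\vbx)+j$ and $\pr(I(\vbTheta)\in\Zset)=1$ to collapse the sum. All the bookkeeping points you flag (redundancy of the leftover indicator, $0$-homogeneity of the indicator-multiplied function) are handled exactly as in the paper's argument.
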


\begin{proof}
  The first equality in~(\ref{eq:newidentity}) is a repeat of (\ref{eq:identity-Q-infargmax}).  The
  function $\vby \mapsto \norm[\alpha]{\bsy}^{-\alpha} H(\bsy)$ is shift invariant and homogeneous
  with degree 0, thus applying the time change formula~(\ref{eq:time-shift-homo-0}), we obtain
  \begin{align*}
    \esp[H(\vbTheta)\ind{I(\vbTheta)=0}] 
    & = \sum_{j\in\Zset} \esp \left[ |\vbTheta_{j}|^\alpha  \frac{H(\vbTheta)}{\norm[\alpha]{\vbTheta}^\alpha} \ind{I(\vbTheta)=0} \right]  \\
    & = \sum_{j\in\Zset} \esp \left[ \frac{H(\vbTheta)}{\norm[\alpha]{\vbTheta}^\alpha} \ind{I(B^j\vbTheta)=0} \right]  \\
    & =  \esp \left[ \frac{H(\vbTheta)}{\norm[\alpha]{\vbTheta}^\alpha} \sum_{j\in\Zset}\ind{I(\vbTheta)=-j} \right] 
      =  \esp \left[ \frac{H(\vbTheta)}{\norm[\alpha]{\vbTheta}^\alpha}  \right] \; .
  \end{align*}
  We have used in the middle lines that $I(\shift^j\vbTheta)=0$ implies $|\bsTheta_{-j}|\ne0$ and is
  equivalent to $I(\vbTheta)=-j$ and to conclude we used that $\pr(I(\vbTheta)\in\ZZ)=1$ as a
  consequence of the assumption $\pr(\lim_{|j|\to\infty}\norm{\vbY_j}=0)=1$.
\end{proof}
As a consequence of \Cref{lem:newidentity}, for every measurable shift invariant
$\alpha$-homogeneous function $H$ on $(\Rset^d)^\Zset$, the quantities
\begin{align*}
  \esp[|H(\vbQ)|]  \; ,   \  \     \esp[|H(\vbTheta)|\ind{I(\vbTheta)=0}]  \; ,   \  \ 
  \esp\left[\frac{|H(\vbTheta)|}{\norm[\alpha]{\vbTheta}^\alpha}\right] \; , 
\end{align*}
are simultaneously finite or infinite and in the former case, the identity (\ref{eq:newidentity})
holds.

The previous identities involve the sequence $\vbQ$ and the sequence $\vbTheta$. In practice, the
quantities in terms of the spectral tail process are usually easier to compute explicitly or for use
in simulation since they do not involve a conditioning contrary to those with the sequence~$\vbQ$.
Moreover, it is often relatively easy to compute the forward tail process $\{\vbTheta_j,j\geq0\}$ but more
difficult to compute the backward tail process $\{\vbTheta_j,j<0\}$. Therefore, it is useful to find
an expression of the previous quantities in terms of the forward tail process alone if possible. The
following result gives sufficient conditions for such an identity to hold. The conditions are
unprimitive but easy to check in examples.

\begin{lemma}
  \label{lem:general-identity-forward}
  Assume that $\pr(\lim_{|j|\to\infty} |\vbY_j|=0)=1$. Let $H$ be a shift invariant,
  $\alpha$-homogenous measurable function defined on a subset $\mco$ of $\ell_\alpha$ such that
  $\pr(\vbTheta_{n,\infty}\in\mco)=1$ for all $n\in \Zset$.  Assume moreover
  \begin{enumerate}[(i)]
  \item \label{item:integrabilite-diff} $\EE[|H(\vbTheta_{0,\infty}) - H(\vbTheta_{1,\infty})|] < \infty$;
  \item \label{item:to0} $\pr(\lim_{n\to\infty} H(\vbTheta_{n,\infty}) = 0)=1$;
  \item \label{item:toH}  $\pr\left(\lim_{n\to\infty} H(\vbTheta_{-n,\infty}) = H(\vbTheta)\right)=1$. 
  \end{enumerate}
  Then $\esp[|H(\vbQ)|]<\infty$ and 
  \begin{align}
    \label{eq:double-identity-general}
    \vartheta \esp[H(\vbQ)]     = \EE[H(\vbTheta_{0,\infty}) - H(\vbTheta_{1,\infty})] \; .
  \end{align}

\end{lemma}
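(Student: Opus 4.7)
The plan is to reduce the statement to the intermediate identity
\begin{align*}
  \esp[H(\vbTheta)\ind{I(\vbTheta)=0}] = \esp[H(\vbTheta_{0,\infty}) - H(\vbTheta_{1,\infty})],
\end{align*}
after which \Cref{prop:loi-Q} delivers the conclusion. Its assertion $\vartheta\esp[H(\vbQ)] = \esp[H(\vbTheta)\ind{I(\vbTheta)=0}]$ remains valid for our~$H$: on $\{I(\vbTheta)=0\}$ we have $\vbTheta^*=|\vbTheta_0|=1$, so $H(\vbTheta)\ind{I(\vbTheta)=0}$ equals $H(\vbTheta/\vbTheta^*)\ind{I(\vbTheta)=0}$, with $H(\vbTheta)$ read in the sense of hypothesis~(iii). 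Finiteness of $\esp[|H(\vbQ)|]$ will follow by running the same argument with $|H|$ in place of~$H$; the former inherits (i)--(iii) from the latter via continuity of the absolute value and the reverse triangle inequality.

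Introduce $F_j(\vby) := H(\vby_{j,\infty}) - H(\vby_{j+1,\infty})$. Shift invariance of~$H$ yields the key relation $F_j = F_0\circ B^{-j}$ for every $j\in\Zset$; moreover $\vby_{0,\infty}$ and $\vby_{1,\infty}$ coincide whenever $\vby_0=0$, so $F_0(\vby)=0$ whenever $\vby_0=0$. The map $\vby\mapsto F_0(\vby)\ind{I(\vby)=-j}$ is $\alpha$-homogeneous. Applying the time change formula~\eqref{eq:time-shift-homo-alpha} with $k=-j$, noting $I(B^{-j}\vbTheta)=-j \Leftrightarrow I(\vbTheta)=0$, and dropping the indicator $\ind{|\vbTheta_j|\neq 0}$ on the left (since $F_0(B^{-j}\vbTheta)=0$ when $\vbTheta_j=0$) and $\ind{|\vbTheta_{-j}|\neq 0}$ on the right (since $|\vbTheta_{-j}|=\vbTheta^*\geq 1$ on $\{I(\vbTheta)=-j\}$), gives
\begin{align*}
  \esp[F_j(\vbTheta)\ind{I(\vbTheta)=0}] = \esp[F_0(\vbTheta)\ind{I(\vbTheta)=-j}].
\end{align*}

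By (ii) and (iii), the finite telescope $\sum_{j=-n}^{n}F_j(\vbTheta) = H(\vbTheta_{-n,\infty}) - H(\vbTheta_{n+1,\infty})$ tends a.s.\ to $H(\vbTheta)$ as $n\to\infty$. Applying the displayed identity above with $|F_0|$ in place of $F_0$ (which also vanishes when $\vby_0=0$), summing over $j$, and using $\sum_{j\in\Zset}\ind{I(\vbTheta)=-j}=1$ a.s., yields
\begin{align*}
  \sum_{j\in\Zset}\esp[|F_j(\vbTheta)|\ind{I(\vbTheta)=0}] = \esp[|F_0(\vbTheta)|] = \esp[|H(\vbTheta_{0,\infty})-H(\vbTheta_{1,\infty})|] < \infty
\end{align*}
by~(i). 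This absolute convergence both shows $\sum_{j\in\Zset}F_j(\vbTheta)=H(\vbTheta)$ a.s.\ on $\{I(\vbTheta)=0\}$ (any reordering of an absolutely convergent series gives the same sum) and legitimises Fubini's theorem:
\begin{align*}
  \esp[H(\vbTheta)\ind{I(\vbTheta)=0}] = \sum_{j\in\Zset}\esp[F_j(\vbTheta)\ind{I(\vbTheta)=0}] = \sum_{j\in\Zset}\esp[F_0(\vbTheta)\ind{I(\vbTheta)=-j}] = \esp[F_0(\vbTheta)],
\end{align*}
which is the intermediate identity.

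The main obstacle is precisely this interchange of the infinite telescoping sum with the expectation: the series $\sum_j F_j(\vbTheta)$ is in general only conditionally convergent, and the partial sums $[H(\vbTheta_{-n,\infty})-H(\vbTheta_{n+1,\infty})]\ind{I(\vbTheta)=0}$ admit no obvious dominating function. The device of applying the time change identity to $|F_0|$ rather than to $F_0$ converts the problem into one of absolute summability of expectations, which is finite by virtue of~(i); this is where hypothesis~(i) enters decisively.
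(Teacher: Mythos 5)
Your proof is correct and follows essentially the same route as the paper: both telescope $H(\vbTheta)=\sum_{j}\{H(\vbTheta_{j,\infty})-H(\vbTheta_{j+1,\infty})\}$, use the time change formula to move the index onto a family of weights summing to one, and deduce absolute summability from hypothesis~(i) so that Fubini applies. The only (immaterial) difference is that you localize with the weights $\ind{I(\vbTheta)=-j}$ and the $\alpha$-homogeneous form~(\ref{eq:time-shift-homo-alpha}), landing on $\esp[H(\vbTheta)\ind{I(\vbTheta)=0}]$ and \Cref{prop:loi-Q}, whereas the paper normalizes by $\norm[\alpha]{\vbTheta}^{-\alpha}$ and uses the $0$-homogeneous form~(\ref{eq:time-shift-homo-0}), landing on the other member of the chain of identities in \Cref{lem:newidentity}.
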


\begin{proof}
  Since the function
  $\vbx\mapsto \norm[\alpha]{\vbx}^{-\alpha} \{H(\vbx_{0,\infty}) - H(\vbx_{1,\infty})\}$ is
  $\alpha$-homogeneous and equal to $0$ whenever $|\vbx_0|=0$, by the time change formula
  (\ref{eq:time-shift-homo-0}), we have
  \begin{align*}
    \sum_{j\in\Zset} \esp\left[ \frac{|H(\vbTheta_{-j,\infty}) - H(\vbTheta_{-j+1,\infty})|}{\norm[\alpha]{\vbTheta}^\alpha} \right]  
    & = \sum_{j\in\Zset} \esp\left[ |\vbTheta_j|^\alpha \frac{|H(\vbTheta_{0,\infty}) - H(\vbTheta_{1,\infty})|}{\norm[\alpha]{\vbTheta}^\alpha} \right]  \\
    & = \EE[|H(\vbTheta_{0,\infty}) - H(\vbTheta_{1,\infty})|] < \infty \; .
  \end{align*}
  Consequently, $\pr(\sum_{j\in\Zset} |H(\vbTheta_{-j,\infty}) - H(\vbTheta_{-j+1,\infty})|<\infty)=1$ and 
  \begin{align*}
    \sum_{j\in\Zset} \esp\left[ \frac{H(\vbTheta_{-j,\infty}) - H(\vbTheta_{-j+1,\infty})}{\norm[\alpha]{\vbTheta}^\alpha} \right]  
    & = \esp\left[\sum_{j\in\Zset} \frac{H(\vbTheta_{-j,\infty}) - H(\vbTheta_{-j+1,\infty})}{\norm[\alpha]{\vbTheta}^\alpha} \right]   \\
    & = \EE[H(\vbTheta_{0,\infty}) - H(\vbTheta_{1,\infty})]  \; .
  \end{align*}
  On the other hand, assumptions~\ref{item:to0} and~\ref{item:toH} and the dominated convergence
  theorem ensure that
  \begin{align*}
    \sum_{j\in\Zset} \frac{H(\vbTheta_{-j,\infty}) - H(\vbTheta_{-j+1,\infty})}{\norm[\alpha]{\vbTheta}^\alpha}   
    & =  \lim_{n\to\infty} \sum_{-n<j\leq n} \frac{H(\vbTheta_{-j,\infty}) - H(\vbTheta_{-j+1,\infty})}{\norm[\alpha]{\vbTheta}^\alpha}\\
    & =  \lim_{n\to\infty} \frac{H(\vbTheta_{-n,\infty})-H(\vbTheta_{n,\infty})}{\norm[\alpha]{\vbTheta}^\alpha} 
      =  \frac{H(\vbTheta)}{\norm[\alpha]{\vbTheta}^\alpha}\; .
  \end{align*}  
  Hence, $\esp[ \norm[\alpha]{\vbTheta}^{-\alpha} |H(\vbTheta)|] < \infty$ and
  $\esp[\norm[\alpha]{\vbTheta}^{-\alpha}H(\vbTheta) ] = \EE[H(\vbTheta_{0,\infty}) -
  H(\vbTheta_{1,\infty})]$.
  \Cref{lem:newidentity} finally yields that $\esp[|H(\vbQ)|]<\infty$ and
  that~(\ref{eq:double-identity-general}) holds.
\end{proof}

\begin{example}
  As a first illustration of the previous results, we provide other expressions for the candidate
  extremal index $\vartheta$. These expressions might be used for statistical inference on the
  extremal index when it is known to be equal to the candidate  extremal index.  If
  $ \pr(\lim_{|k|\to\infty} \norm{\bsY_k} = 0)=1$, then
  \begin{align}
    \vartheta 
    & = \pr(I(\vbY)=0) = \pr(I(\vbTheta)=0)  \label{eq:candidate-infargmax} \\
    & = \esp \left[ \frac{(\vbY^*)^\alpha}{\sum_{j\in\Zset} \norm{\vbY_j}^\alpha} \right] 
      = \esp \left[ \frac{(\vbTheta^*)^\alpha}{\sum_{j\in\Zset} \norm{\vbTheta_j}^\alpha} \right] \label{eq:EI-debicky-hashorva} \\
    & = \esp \left[ (\vbTheta_{0,\infty}^*)^\alpha - (\vbTheta^*_{1,\infty})^\alpha \right] \label{eq:vartheta-diff} \\
    & = \esp \left[ \frac{1}{\sum_{j\in\Zset} \ind{|\vbY_j|>1}} \right] \label{eq:candidate-hashorva}
  \end{align}
  The identity (\ref{eq:candidate-infargmax}) is obtained by applying \ref{prop:loi-Q} to the
  function $H\equiv1$; (\ref{eq:EI-debicky-hashorva}) is obtained by applying \Cref{lem:newidentity}
  to the function $H$ defined by $H(\vbx) = \vbx^*$ and~(\ref{eq:vartheta-diff}) is obtained by
  applying \Cref{lem:general-identity-forward} to the same function.  We only need to
  prove~(\ref{eq:candidate-hashorva}). The assumption implies that
  $\pr(\sum_{i\in\Zset} \ind{|\vbY_i|>1}<\infty)=1$.  Applying the time change
  formula~(\ref{eq:time-shift-Y}), we obtain
  \begin{align*}
    \vartheta 
    & = \pr(\vbY_{-\infty,-1}^* \leq 1) = \esp \left[ \frac{\sum_{j\in\Zset} \ind{|\vbY_j|>1}}{\sum_{i\in\Zset} 
      \ind{|\vbY_i|>1}} \ind{\vbY_{-\infty,-1}^* \leq 1} \right] \\
    & = \sum_{j\in\Zset} \esp \left[ \frac{\ind{\vbY_{-\infty,-1}^* \leq 1} \ind{|\vbY_j|>1}}{\sum_{i\in\Zset} 
      \ind{|\vbY_i|>1}}  \right] \\
    & = \sum_{j\in\Zset} \esp \left[ \frac{ \ind{\vbY_{-\infty,-j-1}^* \leq 1} \ind{|\vbY_{-j}|>1}}{\sum_{i\in\Zset} 
      \ind{|\vbY_i|>1}} \right] \\
    & =  \esp \left[ \frac{ \sum_{j\in\Zset}\ind{\vbY_{-\infty,-j-1}^* \leq 1} \ind{|\vbY_{-j}|>1}}{\sum_{i\in\Zset} 
      \ind{|\vbY_i|>1}} \right] \\
    & =  \esp \left[ \frac{ \ind{\vbY^* > 1} }{\sum_{i\in\Zset} \ind{|\vbY_i|>1}} \right] 
      =  \esp \left[ \frac{1}{\sum_{i\in\Zset} \ind{|\vbY_i|>1}} \right] \; .
  \end{align*}
  The expressions (\ref{eq:candidate-infargmax}) and (\ref{eq:EI-debicky-hashorva}) were obtained by
  \cite{debicki:hashorva:2017} for max-stable processes, the expression~(\ref{eq:vartheta-diff}) is
  due to \cite[Remark~4.7]{basrak:segers:2009}   (\ref{eq:candidate-hashorva}) is due to Enkelejd Hashorva
  (personal communication). 
\end{example}

\begin{example}  
  In the case $d=1$, for $x\in\Rset$ define
  $x^{\langle\alpha\rangle}=\max(x,0)^\alpha-\max(-x,0)^\alpha =x|x|^{\alpha-1}$. Applying
  \Cref{lem:general-identity-forward} to the function
  $H(\vby)=\sum_{j\in\Zset} y_j^{\langle\alpha\rangle}$ which trivially satisfies its assumptions
  yields
  \begin{align}
    \vartheta \esp \left[ \sum_{j\in\Zset} Q_j^{\langle\alpha\rangle} \right]  
    = \esp[\Theta_0^{\langle\alpha\rangle}]  = \esp[\Theta_0]  \; .  \label{eq:DH95-3.13}
  \end{align}
  This identity was indirectly obtained in the proof of \cite[Theorem~3.2]{davis:hsing:1995} in the
  case $\alpha\in[1,2]$ by identification of two terms.  Recall that $\esp[\Theta_0]$ is the
  skewness of the tail of the marginal distribution of the original time series $\sequ{X}$.
\end{example}

In \cite[Theorem 4.5]{basrak:planinic:soulier:2016}, the condition
$\esp[\norm[1]{\vbQ}^\alpha]<\infty$ is used in order to establish functional convergence of the
partial sum process of a weakly dependent stationary regularly varying time-series.  For
$\alpha\leq1$, the concavity of the function $x\mapsto x^\alpha$ and (\ref{eq:sumQtheta-egal1})
implies that $\esp[\norm[1]{\vbQ}^\alpha]<\infty$. For $\alpha>1$, the latter integrability
condition does not always hold and we now obtain a sufficient condition in terms of the forward
spectral tail process using \Cref{lem:general-identity-forward}.
\begin{lemma}
  \label{coro:equivalences}
  Assume that $\pr(\lim_{|t|\to\infty} |\vbY_t|=0)=1$. Then
  \begin{align}
    \vartheta  \esp\left[ \left(\sum_{j\in\Zset} |\vbQ_j|\right)^\alpha\right] 
    & = \esp \left[ \left(\sum_{j\in\Zset} \norm{\vbTheta_j}\right)^{\alpha-1} \right] \; .   
    \label{eq:equivalence3}
  \end{align} 
  These quantities are always finite if $\alpha\leq 1$ and are simultaneously finite or infinite if
  $\alpha>1$. Moreover, the following conditions are equivalent:
  \begin{align}
      \label{eq:summability-Q}
    \esp\left[ \left(\sum_{j\in\Zset} |\vbQ_j|\right)^\alpha\right] <\infty \; , \\
    \esp \left[ \left(\sum_{j=0}^\infty \norm{\vbTheta_j}\right)^{\alpha-1} \right]   < \infty \; .
      \label{eq:summability-theta-forward}
  \end{align}  
\end{lemma}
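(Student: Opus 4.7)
The plan is to derive~(\ref{eq:equivalence3}) by combining \Cref{lem:newidentity} with one further application of the time change formula, and then deduce the finiteness dichotomy and the equivalence of~(\ref{eq:summability-Q}) and~(\ref{eq:summability-theta-forward}) from elementary bounds together with \Cref{lem:general-identity-forward}.

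First I would apply \Cref{lem:newidentity} to the shift-invariant, $\alpha$-homogeneous, non-negative function $H(\vby) = (\sum_{j\in\Zset}|y_j|)^\alpha$, which immediately gives
\[
\vartheta \esp\Bigl[\Bigl(\sum_{j\in\Zset} |\vbQ_j|\Bigr)^\alpha\Bigr] = \esp\left[\frac{(\sum_{j\in\Zset}|\vbTheta_j|)^\alpha}{\sum_{j\in\Zset}|\vbTheta_j|^\alpha}\right] \; .
\]
To convert the right-hand side into $\esp[(\sum_{j\in\Zset}|\vbTheta_j|)^{\alpha-1}]$ I would invoke~(\ref{eq:time-shift-homo-0}) for the $0$-homogeneous function $h(\vby) = |y_0|(\sum_j|y_j|)^{\alpha-1}/\sum_j|y_j|^\alpha$. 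Since $\sum_j|y_j|$ and $\sum_j|y_j|^\alpha$ are shift invariant and $|\vbTheta_0|=1$ a.s., the resulting identity for each $k\in\Zset$ reads
\[
\esp\left[\frac{|\vbTheta_{-k}| (\sum_j|\vbTheta_j|)^{\alpha-1}}{\sum_j|\vbTheta_j|^\alpha}\right] = \esp\left[\frac{|\vbTheta_k|^\alpha (\sum_j|\vbTheta_j|)^{\alpha-1}}{\sum_j|\vbTheta_j|^\alpha}\right] \; ,
\]
where on the left the factor $|\vbTheta_{-k}|$ absorbs the indicator $\ind{|\vbTheta_{-k}|\ne 0}$. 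Summing over $k\in\Zset$ (Tonelli for non-negative integrands), the left-hand sum collapses to $\esp[(\sum_j|\vbTheta_j|)^\alpha/\sum_j|\vbTheta_j|^\alpha]$ while the right-hand sum collapses to $\esp[(\sum_j|\vbTheta_j|)^{\alpha-1}]$; combined with the first step, this gives~(\ref{eq:equivalence3}).

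For the finiteness claims, when $\alpha\leq1$ one has $(\sum_{j\in\Zset}|\vbTheta_j|)^{\alpha-1}\leq|\vbTheta_0|^{\alpha-1}=1$ (since $\alpha-1\leq0$ and $|\vbTheta_0|=1$), so the right-hand side of~(\ref{eq:equivalence3}) is at most $1$; while by subadditivity $(\sum|\vbQ_j|)^\alpha\leq\sum|\vbQ_j|^\alpha$, together with~(\ref{eq:sumQtheta-egal1}), the left-hand side is at most $\vartheta^{-1}$. When $\alpha>1$ the identity~(\ref{eq:equivalence3}) forces simultaneous finiteness or infiniteness.

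Finally, for the equivalence $(\ref{eq:summability-Q})\iff(\ref{eq:summability-theta-forward})$ under $\alpha>1$, the direction $\Rightarrow$ is immediate from~(\ref{eq:equivalence3}) and the monotonicity $(\sum_{j\in\Zset}|\vbTheta_j|)^{\alpha-1}\geq(\sum_{j\geq 0}|\vbTheta_j|)^{\alpha-1}$. For the converse I would apply \Cref{lem:general-identity-forward} to $H(\vby)=(\sum_j|y_j|)^\alpha$: the domain hypothesis holds since $\alpha-1>0$ and~(\ref{eq:summability-theta-forward}) give $\sum_{j\geq 0}|\vbTheta_j|<\infty$ a.s., hence $\vbTheta_{n,\infty}\in\ell_1$ a.s.\ for every $n\in\Zset$; hypothesis~(i) is verified via the mean value theorem inequality $(1+b)^\alpha-b^\alpha\leq\alpha(1+b)^{\alpha-1}$ applied with $b=\sum_{j\geq 1}|\vbTheta_j|$; (ii) follows from the fact that tail sums of a summable series vanish, and (iii) from monotone convergence. \Cref{lem:general-identity-forward} then yields
\[
\vartheta \esp\Bigl[\Bigl(\sum_j|\vbQ_j|\Bigr)^\alpha\Bigr] = \esp\Bigl[\Bigl(\sum_{j\geq 0}|\vbTheta_j|\Bigr)^\alpha - \Bigl(\sum_{j\geq 1}|\vbTheta_j|\Bigr)^\alpha\Bigr] \leq \alpha \esp\Bigl[\Bigl(\sum_{j\geq 0}|\vbTheta_j|\Bigr)^{\alpha-1}\Bigr] < \infty \; .
\]
The main technical obstacle is the passage from the ratio $(\sum|\vbTheta_j|)^\alpha/\sum|\vbTheta_j|^\alpha$ to $(\sum|\vbTheta_j|)^{\alpha-1}$, which requires the auxiliary $0$-homogeneous function~$h$ and the Tonelli summation; the remaining steps reduce to the mean value theorem and monotone convergence.
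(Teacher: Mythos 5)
Your proof is correct and follows essentially the same route as the paper: the only cosmetic difference is that for~(\ref{eq:equivalence3}) you pass through \Cref{lem:newidentity} and then apply the time change formula to the $0$-homogeneous ratio $|\vby_0|\norm[1]{\vby}^{\alpha-1}/\norm[\alpha]{\vby}^\alpha$, whereas the paper stays with the infargmax indicator from \Cref{prop:loi-Q} and applies the time change formula to the $\alpha$-homogeneous function $|\vby_0|\norm[1]{\vby}^{\alpha-1}\ind{I(\vby)=j}$ — two equivalent bookkeepings of the same computation. The finiteness discussion and the converse implication via \Cref{lem:general-identity-forward} applied to $\norm[1]{\cdot}^\alpha$ (with condition~\ref{item:integrabilite-diff} checked through $\norm[1]{\bsTheta_{0,\infty}}=1+\norm[1]{\bsTheta_{1,\infty}}$ and the elementary bound on $a^\alpha-b^\alpha$) coincide with the paper's argument.
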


\begin{proof}
  Applying \Cref{prop:loi-Q} to the shift invariant $\alpha$-homogeneous function $\vby\mapsto\norm[1]{\vby}^{\alpha}$
  and the time change formula~(\ref{eq:time-shift-homo-alpha}) to the $\alpha$-homogeneous function
  $\vby\mapsto|\vby_0|\norm[1]{\vby}^{\alpha-1}\ind{I(\vby)=j}$ we obtain
  \begin{align*}
    \vartheta  \esp\left[ \norm[1]{\vbQ}^\alpha\right] 
    & =     \esp \left[ \norm[1]{\vbTheta}^{\alpha}  \ind{I(\vbTheta)=0}\right]   \\
    & = \sum_{j\in\Zset} \esp \left[ |\vbTheta_{-j}|  \norm[1]{\vbTheta}^{\alpha-1}  \ind{I(\shift^j\vbTheta)=j} \ind{|\vbTheta_{-j}|\ne0}\right]  \\
    & = \sum_{j\in\Zset} \esp \left[ |\vbTheta_0| \norm[1]{\vbTheta}^{\alpha-1}  \ind{I(\vbTheta)=j} \right]  \\
    & = \esp \left[ \norm[1]{\vbTheta}^{\alpha-1} \sum_{j\in\Zset}  \ind{I(\vbTheta)=j} \right]  
      = \esp[\norm[1]{\vbTheta}^{\alpha-1}] \; . 
  \end{align*}    
  Hence, (\ref{eq:equivalence3}) holds and in particular (\ref{eq:summability-Q}) implies (\ref{eq:summability-theta-forward}). Moreover, since $\norm[1]{\vbTheta}\geq 1$,
  (\ref{eq:summability-Q}) always holds when $\alpha\leq 1$ as already noted in the discussion preceding this lemma. Conversely, if (\ref{eq:summability-theta-forward}) holds and $\alpha>1$,
  applying \Cref{lem:general-identity-forward} to the function $H(\vbx) = \norm[1]{\vbx}^\alpha$ yields~(\ref{eq:summability-Q}). Just note that the condition~\ref{item:integrabilite-diff} is implied by the bound~(\ref{eq:easy_bound}) below and the fact that $\norm[1]{\bsTheta_{0,\infty}}=1+\norm[1]{\bsTheta_{1,\infty}}$. 
\end{proof}

\begin{corollary}
  \label{lem:identites-again}
  Assume that $\pr(\lim_{|t|\to\infty} |\vbY_t|=0)=1$ and that~(\ref{eq:summability-Q}) or
  (\ref{eq:summability-theta-forward}) holds.  Let $H$ be a non negative, shift invariant,
  $\alpha$-homogenous measurable function on~$\setE{d}$ such that
  \begin{align}
    \label{eq:H-lipshitz}
    |H(\vbx) - H(\vby)| \leq C\norm[1]{\vbx-\vby}  \; , 
  \end{align}
  for some constant $C>0$ and all $\vbx,\vby\in \setE{d} $.  Then
  $\esp[|H^\alpha(\vbTheta_{0,\infty})-H^\alpha(\vbTheta_{1,\infty})|] <\infty$ and
  \begin{align}
    \vartheta \esp[H^\alpha(\vbQ)] 
    = \esp[H^\alpha(\vbTheta_{0,\infty})-H^\alpha(\vbTheta_{1,\infty})] \; .     \label{eq:newidentity-2}
  \end{align}
\end{corollary}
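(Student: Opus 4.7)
The plan is to apply \Cref{lem:general-identity-forward} to $G := H^\alpha$, which is non-negative, shift invariant, and $\alpha$-homogeneous (reading $H$ as $1$-homogeneous, as is forced by combining homogeneity with the Lipschitz condition---a nontrivial $\alpha$-homogeneous Lipschitz function cannot exist for $\alpha>1$). Then~(\ref{eq:newidentity-2}) is exactly the conclusion of that lemma for $G$, and the integrability statement $\esp[|H^\alpha(\vbTheta_{0,\infty}) - H^\alpha(\vbTheta_{1,\infty})|] < \infty$ is precisely its hypothesis~\ref{item:integrabilite-diff}.

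First I would record the ``easy bound'' alluded to just before this corollary: since $H(\bszero) = 0$ by homogeneity, the Lipschitz property gives $H(\vbx) \leq C\norm[1]{\vbx}$ for every $\vbx\in\setE{d}$. Moreover, identifying $\vbTheta_{0,\infty}$ and $\vbTheta_{1,\infty}$ with elements of $\setE{d}$ by zero-padding, the two sequences differ only at coordinate~$0$, so the Lipschitz bound yields
\begin{align*}
  |H(\vbTheta_{0,\infty}) - H(\vbTheta_{1,\infty})| \leq C\norm{\vbTheta_0} = C \; .
\end{align*}

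Conditions~\ref{item:to0} and~\ref{item:toH} of \Cref{lem:general-identity-forward} are then routine. From~(\ref{eq:summability-Q}) we have $\pr(\norm[1]{\vbQ}<\infty)=1$; applying \Cref{prop:loi-Q} to the shift invariant function $K(\vby)=\ind{\norm[1]{\vby}=\infty}$ followed by \Cref{coro:nul-infargmax} on the shift invariant homogeneous set $\{\norm[1]{\vby}=\infty\}$ then yields $\norm[1]{\vbTheta}<\infty$ almost surely. Consequently $H(\vbTheta_{n,\infty}) \leq C\sum_{j\geq n}\norm{\vbTheta_j} \to 0$ and $|H(\vbTheta_{-n,\infty}) - H(\vbTheta)| \leq C\sum_{j<-n}\norm{\vbTheta_j} \to 0$ almost surely, hence the same limits hold for $G = H^\alpha$.

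The substantive step, and the main obstacle, is condition~\ref{item:integrabilite-diff}. I would split cases. For $\alpha \leq 1$ the elementary inequality $|a^\alpha - b^\alpha| \leq |a-b|^\alpha$ valid for $a,b \geq 0$ combined with the easy bound above yields the uniform estimate $|G(\vbTheta_{0,\infty}) - G(\vbTheta_{1,\infty})| \leq C^\alpha$. For $\alpha > 1$ the mean value inequality $|a^\alpha - b^\alpha| \leq \alpha \max(a,b)^{\alpha-1}|a-b|$ applied together with $H(\vbTheta_{0,\infty}) \vee H(\vbTheta_{1,\infty}) \leq C\norm[1]{\vbTheta_{0,\infty}}$ gives
\begin{align*}
  |G(\vbTheta_{0,\infty}) - G(\vbTheta_{1,\infty})| \leq \alpha C^\alpha \norm[1]{\vbTheta_{0,\infty}}^{\alpha-1} \; ,
\end{align*}
whose expectation is finite precisely by~(\ref{eq:summability-theta-forward}), equivalent to~(\ref{eq:summability-Q}) via \Cref{coro:equivalences}. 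With all three hypotheses verified, \Cref{lem:general-identity-forward} applied to $G$ delivers~(\ref{eq:newidentity-2}) together with the claimed integrability.
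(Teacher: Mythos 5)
Your proof is correct and follows essentially the same route as the paper: apply \Cref{lem:general-identity-forward} to $H^\alpha$, verifying its hypothesis~\ref{item:integrabilite-diff} via the elementary bound $|a^\alpha-b^\alpha|\leq |a-b|^\alpha$ (for $\alpha\leq1$) or $\alpha|a-b|(a\vee b)^{\alpha-1}$ (for $\alpha>1$) combined with $|H(\vbTheta_{0,\infty})-H(\vbTheta_{1,\infty})|\leq C$. You also correctly resolve the $1$-homogeneous reading of $H$ and supply the verification of conditions~\ref{item:to0} and~\ref{item:toH}, which the paper's one-line proof leaves implicit.
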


\begin{proof}
  Apply \Cref{lem:general-identity-forward} to the function $H^\alpha$ which satisfies its
  assumptions in view of~(\ref{eq:H-lipshitz}) and the following bounds: for all $a,b\geq 0$,
  \begin{align}\label{eq:easy_bound}
    |a^\alpha-b^\alpha| \leq
    \begin{cases}
      |a-b|^{\alpha}  & \mbox{  if $\alpha\leq 1$ } \; ,  \\
      \alpha |a-b| (a\vee b)^{\alpha -1} & \mbox{ if $\alpha>1$ } \; .
    \end{cases}
  \end{align}
\end{proof}

\begin{example}
  \label{xmpl:cluster-ruin}
 In the case $d=1$, considering the functionals $\vby\mapsto \left(\sum_{j\in\Zset} y_j\right)_+$ and
  $\vby \mapsto \left(\sup_{k\in\Zset} \sum_{j\leq k} y_j\right)_+$, under
  condition~(\ref{eq:summability-Q}), we obtain
  \begin{align}
    \vartheta \esp \left[ \left(\sum_{j\in\Zset} Q_j\right)_+^\alpha \right] 
    &  = \esp \left[ \left(\sum_{j=0}^\infty  \Theta_j\right)_+^\alpha - \left(\sum_{j=1}^\infty  \Theta_j\right)_+^\alpha\right]  \; , 
      \label{eq:cluster-index0} \\
    \vartheta \esp \left[ \left(\sup_{k\in\Zset} \sum_{j\leq k} Q_j\right)_+^\alpha \right] 
    &  = \esp \left[ \left(\sup_{k\geq0} \sum_{j=0}^k  \Theta_j\right)_+^\alpha - \left(\sup_{k\geq1} \sum_{j=1}^k  \Theta_j\right)_+^\alpha\right]  \; .
    \label{eq:ruin-index} 
  \end{align}
  The quantity in the left hand side of~(\ref{eq:cluster-index0}) appeared in
  \cite{davis:hsing:1995} in relation to the skewness of the limiting stable law of the partial sums
  when $0 < \alpha <2$.  The right hand side appears in \cite{mikosch:wintenberger:2014} under the
  name cluster index and was also related to the limiting stable distribution of the partial sums in
  \cite{mikosch:wintenberger:2016} but for $\alpha\ne1$. The quantity in the right hand side
  of~(\ref{eq:ruin-index}) appeares in \cite{mikosch:wintenberger:2016} in relation to ruin
  probabilities. 
\end{example}

We conclude this section by an example in the case $\alpha=1$, related to the location parameter of
the limiting $1$-stable law of the partial sum process of a weakly dependent regularly varying time
series with tail index~$1$. An implicit and very involved expression was given in
\cite[Theorem~3.2]{davis:hsing:1995}.  An explicit expression is given in \cite[Theorem~4.5 and
Remark~4.8]{basrak:planinic:soulier:2016} under the condition (\ref{eq:logsum-Q}) below.  The
following lemma shows that this additional integrability condition is very light and moreover allows
to express the location parameter in terms of the (forward) spectral tail process.

\begin{lemma}
  Assume that $\alpha=1$ and $\PP(\lim_{|j|\to\infty} \norm{\vbY_j}=0)=1$.  The following conditions
  are equivalent:
    \begin{align}
      & \sum_{j\in\Zset} \esp\left[|\vbQ_j|\log\left(|\vbQ_j|^{-1}\norm[1]{\vbQ}\right) \right] < \infty \; , \label{eq:logsum-Q} \\
      & \esp \left[ \log\left( \sum_{j=0}^\infty \norm{\vbTheta_j} \right) \right] < \infty  \; . \label{eq:logsum-forward} 
    \end{align}
  If either condition holds, then 
  \begin{align}
     \label{eq:thetalogtheta-summable}
    \PP\left(\sum_{j\in\Zset} |\vbTheta_j| |\log(|\vbTheta_j|)| < \infty \right) = 1 \; .
  \end{align}
  If moreover $d=1$, then
    \label{eq:expression-logsum}
    \begin{align}
      \vartheta \EE [S_{\vbQ}\log(|S_{\vbQ}|)] -       \vartheta \sum_{j\in\ZZ} \EE[Q_j \log(|Q_j|)] 
      &  = \esp \left[ S_0\log(S_0) - S_1\log(S_1)\right] \; ,  \label{expressionlgosum-forward}
    \end{align}
    with $S_{\vbQ} = \sum_{i\in\Zset} Q_i$ and $S_i = \sum_{j=i}^\infty \Theta_i$, $i=0,1$ and all the
    expectations in~(\ref{expressionlgosum-forward}) are well defined and finite.
\end{lemma}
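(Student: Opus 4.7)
The plan is to apply Lemma~\ref{lem:general-identity-forward} (with $\alpha = 1$) to the shift-invariant, $1$-homogeneous function $H(\vby) = \sum_{j\in\Zset} |y_j|\log(\norm[1]{\vby}/|y_j|)$ (convention $0\log 0 = 0$). Using $|\vbTheta_0|=1$, a direct computation gives the key algebraic identity $H(\vbTheta_{0,\infty}) - H(\vbTheta_{1,\infty}) = f(S_0) - f(S_1)$, where $f(x) = x\log x$ and $S_i = \sum_{j\geq i}|\vbTheta_j|$, so $S_0 = 1+S_1$. Writing $f(S_0) - f(S_1) = 1 + \int_0^1 \log(S_1 + t)\,dt$ and exploiting $S_0 \geq 1$, one obtains a two-sided bound $\log S_0 - C \leq f(S_0) - f(S_1) \leq \log S_0 + 1$ for an absolute constant $C>0$, so the finiteness of $\esp[f(S_0)-f(S_1)]$ is equivalent to~(\ref{eq:logsum-forward}).

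The main work is establishing~(\ref{eq:thetalogtheta-summable}) from either hypothesis. Under~(\ref{eq:logsum-Q}), Lemma~\ref{lem:newidentity} gives $\esp[H(\vbTheta)/\norm[1]{\vbTheta}] = \vartheta\esp[H(\vbQ)] < \infty$, so $H(\vbTheta)<\infty$ a.s.; expanding $H(\vbTheta) = \norm[1]{\vbTheta}\log\norm[1]{\vbTheta} - \sum_j |\vbTheta_j|\log|\vbTheta_j|$ and observing that only finitely many $|\vbTheta_j|>1$ contribute to the positive part then yields~(\ref{eq:thetalogtheta-summable}). Under~(\ref{eq:logsum-forward}) the situation is more delicate, and this is the main obstacle: Lemma~\ref{lem:general-identity-forward} cannot be invoked directly because its hypothesis~(ii) is not yet known. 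To bypass this, I would apply the time change formula~(\ref{eq:time-shift-homo-0}) to the $0$-homogeneous function $\vby\mapsto [H(\vby_{0,\infty}) - H(\vby_{1,\infty})]/\norm[1]{\vby}$ and sum over $k\in\Zset$, obtaining via Tonelli the $[0,\infty]$-valued identity
\[
\esp\Bigl[\sum_{k\in\Zset}(a_k+b_k)/\norm[1]{\vbTheta}\Bigr] = \esp[f(S_0)-f(S_1)],
\]
where $a_k = |\vbTheta_k|\log(T_k/|\vbTheta_k|) \geq 0$, $b_k = T_{k+1}\log(T_k/T_{k+1}) \geq 0$ and $T_k = \sum_{j\geq k}|\vbTheta_j|$. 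Under~(\ref{eq:logsum-forward}) the right-hand side is finite, hence $\sum_k(a_k+b_k)<\infty$ a.s. For $k \leq 0$, where $T_k \geq S_0 \geq 1$, the estimate $a_k \geq -|\vbTheta_k|\log|\vbTheta_k|$ (valid whenever $|\vbTheta_k|\leq 1$) handles the negative indices. For large $k \geq K$ with $T_k < 1$, the telescoping identity $T_k\log T_k - T_{k+1}\log T_{k+1} = |\vbTheta_k|\log T_k + b_k$ gives the Abel-type summation $\sum_{k\geq K}|\vbTheta_k|\log T_k = T_K\log T_K - \sum_{k\geq K}b_k$, which is a.s.\ finite, and combining with $|\vbTheta_k|(-\log|\vbTheta_k|) = a_k + |\vbTheta_k|(-\log T_k)$ then yields~(\ref{eq:thetalogtheta-summable}).

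Once~(\ref{eq:thetalogtheta-summable}) is in hand, $H(\vbTheta_{0,\infty})<\infty$ a.s.; the monotonicity of $H$ under prolongation of the sequence, together with monotone and dominated convergence, verifies hypotheses~(ii) and~(iii) of Lemma~\ref{lem:general-identity-forward}. Applying the lemma then gives $\vartheta\esp[H(\vbQ)] = \esp[f(S_0)-f(S_1)]$, establishing the equivalence of~(\ref{eq:logsum-Q}) and~(\ref{eq:logsum-forward}). For~(\ref{expressionlgosum-forward}) in the case $d=1$, I would apply Lemma~\ref{lem:general-identity-forward} to the shift-invariant, $1$-homogeneous function $G(\vby) = \tilde f(\sum_j y_j) - \sum_j \tilde f(y_j)$, where $\tilde f(x) = x\log|x|$ (with $\tilde f(0)=0$). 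Since $|\vbTheta_0|=1$ and hence $\tilde f(\vbTheta_0)=0$, a direct computation gives $G(\vbTheta_{0,\infty}) - G(\vbTheta_{1,\infty}) = \tilde f(S_0) - \tilde f(S_1)$ with $S_i = \sum_{j\geq i}\vbTheta_j$ (now signed). Hypothesis~(i) follows from $|\tilde f(S_0) - \tilde f(S_1)| \leq C + \log^+|S_1| \leq C + \log T_0$ together with~(\ref{eq:logsum-forward}), while (ii)--(iii) follow from~(\ref{eq:thetalogtheta-summable}) by the same prolongation-monotonicity argument as before. The lemma then yields~(\ref{expressionlgosum-forward}), and the finiteness of each individual expectation on the left-hand side is then a direct consequence of the conclusion of Lemma~\ref{lem:general-identity-forward} together with~(\ref{eq:thetalogtheta-summable}).
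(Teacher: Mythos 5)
Your proposal is correct, but it organizes the argument differently from the paper. For the equivalence of (\ref{eq:logsum-Q}) and (\ref{eq:logsum-forward}), the paper's pivot is the \emph{two-sided} quantity $\EE[\log\norm[1]{\vbTheta}]$: it shows via \Cref{prop:loi-Q} and the time change formula that $\vartheta\sum_j\EE[|\vbQ_j|\log(|\vbQ_j|^{-1}\norm[1]{\vbQ})]=\EE[\log\norm[1]{\vbTheta}]$, which immediately gives (\ref{eq:logsum-Q})$\Rightarrow$(\ref{eq:logsum-forward}); the converse is obtained by bounding $\EE[\log\norm[1]{\vbTheta_{-n,\infty}}]$ uniformly in $n$ (using $\log y-\log x\le (y-x)/x$) and monotone convergence, and the pathwise statement (\ref{eq:thetalogtheta-summable}) is only extracted afterwards from (\ref{eq:logsum-Q}). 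You instead take the forward difference $f(S_0)-f(S_1)$ as pivot, derive (\ref{eq:thetalogtheta-summable}) from \emph{each} hypothesis separately --- from (\ref{eq:logsum-forward}) via a pathwise Abel summation built on the nonnegative decomposition $H(\vbTheta_{k,\infty})-H(\vbTheta_{k+1,\infty})=a_k+b_k$ --- and only then invoke \Cref{lem:general-identity-forward}. Your route costs the extra $a_k,b_k$ bookkeeping but has the merit of producing (\ref{eq:thetalogtheta-summable}) directly under (\ref{eq:logsum-forward}), which the paper never does explicitly. The treatment of (\ref{expressionlgosum-forward}) coincides with the paper's: same function, same Lipschitz-type bound on $x\mapsto x\log|x|$, same appeal to \Cref{lem:general-identity-forward}.

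Two points to tighten. First, for the direction (\ref{eq:logsum-Q})$\Rightarrow$(\ref{eq:logsum-forward}) you cannot literally ``apply'' \Cref{lem:general-identity-forward}, since its hypothesis~(i) \emph{is} (\ref{eq:logsum-forward}); you must instead combine your $[0,\infty]$-valued Tonelli identity with the telescoping $\sum_k(a_k+b_k)=H(\vbTheta)$ (valid once (\ref{eq:thetalogtheta-summable}) holds) and \Cref{lem:newidentity} to conclude $\EE[f(S_0)-f(S_1)]=\vartheta\EE[H(\vbQ)]<\infty$. All the ingredients are in your text, but this assembly should be made explicit. Second, the finiteness of $\sum_{j}\EE[|Q_j\log|Q_j||]$ needed to split the left-hand side of (\ref{expressionlgosum-forward}) into two finite expectations follows from (\ref{eq:logsum-Q}) via $|Q_j|\le\norm[1]{\vbQ}$, not from (\ref{eq:thetalogtheta-summable}) as you state; the conclusion is unaffected.
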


\begin{proof}
  Note that $|\vbx_j|^{-1}\norm[1]{\vbx}\geq1$ for all $\vbx\in\ell_0$ and $j\in\Zset$. Applying
  \Cref{prop:loi-Q} to the non negative shift invariant functional 
  \begin{align*}
    \vbx \mapsto \sum_{j\in\ZZ} |\vbx_j| \log\left(|\vbx_j|^{-1} \norm[1]{\vbx}\right)
  \end{align*}
  with the convention $|\vbx_j| \log\left(|\vbx_j|^{-1} \norm[1]{\vbx}\right)=0$ when $|\vbx_j|=0$,
  and the time change formula (\ref{eq:time-shift-Theta}), we obtain 
  \begin{align}
    \vartheta\sum_{j\in\ZZ} \EE\left[|\vbQ_j| \log\left(|\vbQ_j|^{-1} \norm[1]{\vbQ}\right) \right] 
    & = \sum_{j\in\Zset} \EE\left[|\vbTheta_j| \log\left(|\vbTheta_j|^{-1} \norm[1]{\vbTheta}\right)\ind{I(\vbTheta)=0}\right]  
      \label{eq:intermediaire-log1} \\
    &  = \sum_{j\in\Zset} \EE\left[\log\left( \norm[1]{\vbTheta} \right)\ind{I(B^j\vbTheta)=0}\right] \nonumber  \\
    & =    \EE\left[ \log \left(\norm[1]{\vbTheta} \right) \right]    \;    . \label{eq:intermediaire-log2}
  \end{align}  
  This proves that all these terms are simultaneously finite or infinite and
  thus~(\ref{eq:logsum-Q}) implies (\ref{eq:logsum-forward}).

  Conversely, assume that (\ref{eq:logsum-forward}) holds.  By the time change formula, we have for
  $j\geq1$,
  \begin{align*}
    0 \leq     \EE \left[ \log(\norm[1]{\vbTheta_{-j,\infty}})  - \log(\norm[1]{\vbTheta_{-j+1,\infty}} ) \right] 
    & = \EE[ \norm{\vbTheta_j} \{\log(|\vbTheta_j|^{-1}\norm[1]{\vbTheta_{0,\infty}}) - \log(|\vbTheta_j|^{-1}\norm[1]{\vbTheta_{1,\infty}})\}] \\
    & = \EE[ \norm{\vbTheta_j} \{\log(\norm[1]{\vbTheta_{0,\infty}}) - \log(\norm[1]{\vbTheta_{1,\infty}})\}] \; .
  \end{align*}
  The quantity inside the expectation in right hand side is understood to be $0$ if
  $\norm{\vbTheta_j}=0$. Also, $\norm{\vbTheta_j}>0$ implies that $\norm[1]{\vbTheta_{1,\infty}}>0$.
  Note now that if $y\geq x>0$, then $0\leq\log(y) - \log(x) \leq (y-x)/x$. Since
  $\norm[1]{\vbTheta_{0,\infty}}-\norm[1]{\vbTheta_{1,\infty}}=1$, this yields
  \begin{align*}
    0 \leq \EE \left[ \norm{\vbTheta_j} \{\log(\norm[1]{\vbTheta_{0,\infty}}) - \log(\norm[1]{\vbTheta_{1,\infty}})\}\right] 
    \leq \EE \left[\norm[1]{\vbTheta_{1,\infty}}^{-1} \norm{\vbTheta_j} \right] \; .
  \end{align*}
  Since by assumption $0\leq\EE[\log(\norm[1]{\vbTheta_{0,\infty}})]<\infty$, summing over $j$
  yields, for $n\geq1$,
  \begin{align*}
    0 \leq \EE \left[ \log(\norm[1]{\vbTheta_{-n,\infty}})  \right] \leq \EE[\log(\norm[1]{\vbTheta_{0,\infty}})] 
    + \sum_{j=1}^n \EE \left[ \frac{\norm{\vbTheta_j}}{\norm[1]{\vbTheta_{1,\infty}}}\right] 
    \leq \EE[\log(\norm[1]{\vbTheta_{0,\infty}})] + 1 < \infty \; .
  \end{align*}
  By monotone convergence, this proves that (\ref{eq:logsum-forward}) implies
  $\EE[\log(\norm[1]{\vbTheta})]<\infty$ which
  by~(\ref{eq:intermediaire-log2}) implies~(\ref{eq:logsum-Q}).

  As a first consequence, if~(\ref{eq:logsum-Q}) holds, then the identity
  (\ref{eq:intermediaire-log2}) and \Cref{coro:nul-infargmax} imply that
  \begin{align*}
    0 \leq \sum_{j\in\Zset} |\vbTheta_j| \log(|\vbTheta_j|^{-1}\norm[1]{\vbTheta}) < \infty \; ,  \ \ \mbox{ a.s.}
  \end{align*} 
  and this in turn implies~(\ref{eq:thetalogtheta-summable}). 

  To prove the last statement, define the function $S$ on $\ell_1$ by
  $S(\vbx) = \sum_{j\in\Zset} x_j$.  Some easy calculus (cf. \Cref{sec:properties-slogs}) yields the
  following properties: for all $\vbx,\vby\in\ell_1$, such that $|S(\vbx)-S(\vby)|\leq 1$, then
  \begin{align}
  \label{eq:borne-diff-slogs}
    |S(\vbx)\log(|S(\vbx)|) - S(\vby)\log(|S(\vby)|)| & \leq 2 + \log_+(\norm[1]{\vbx}\vee\norm[1]{\vby})  \; .
  \end{align}
   Define a shift invariant, $1$-homogeneous function $H$ on the subset of $\ell_1$ of sequences such that
  $\sum_{j\in\ZZ} |x_j\log(|x_j|)|<\infty$ by
  \begin{align*}
    H(\vbx) = S(\vbx) \log(|S(\vbx)|) - \sum_{j\in\Zset} x_j\log(|x_j|) \; .
  \end{align*}
  By~(\ref{eq:thetalogtheta-summable}), we know that $\vbTheta$ is in this set and since
  $|\vbTheta_0|=1$, we have 
  \begin{align*}
    H(\vbTheta_{0,\infty})-H(\vbTheta_{1,\infty}) =
    S(\vbTheta_{0,\infty})\log(|S(\vbTheta_{0,\infty})|)-S(\vbTheta_{1,\infty})\log(|S(\vbTheta_{1,\infty})|)  
  \end{align*}
  Thus, (\ref{eq:logsum-forward}) and (\ref{eq:borne-diff-slogs}) yield
  \begin{align*}
    \esp[  |H(\vbTheta_{0,\infty}) - H(\vbTheta_{1,\infty}) | ] \leq 2 + \esp[\log(\norm[1]{\vbTheta_{0,\infty}})] < \infty  \; .
  \end{align*}
  Thus condition~\ref{item:integrabilite-diff} of \Cref{lem:general-identity-forward} holds.
  Conditions \ref{item:to0} and \ref{item:toH} trivially hold under
  (\ref{eq:thetalogtheta-summable}).  Thus we can apply
  \Cref{lem:general-identity-forward} to obtain~(\ref{eq:borne-diff-slogs}).  
 \end{proof}
\subsection{Cluster indices}

Let $H$ be a measurable shift invariant functional, homogeneous with degree~1, and continuouss on
{the injection of $(\Rset^d)^k$ into $\ell_0$ for every $k\geq1$}. 
Then, we obtain by (\ref{eq:reg_var}), $1$-homogeneity of $H$ and continuous mapping that
\begin{align*}
  \lim_{x\to\infty} \frac{\pr(H(\bsX_{1,k})>x)}{\pr(\norm{\bsX_0}>x)} 
  & = \lim_{x\to\infty} \frac{\pr(H(x^{-1}\bsX_{1,k})>1)}{\pr(\norm{\bsX_0}>x)} \\
  & = \nu_{1,k}(\{\bsx\in (\Rset^d)^k: H(\bsx)>1\})=\nu(\{\bsy \in(\Rset^d)^\Zset: H(\bsy_{1,k})>1\}) \; .
\end{align*} 
Let the limit on the left hand side or the expression in the right hand side be denoted by $ b_k(H)$
For $d=1$ and $H(\vbx)=\sum_{j\in\Zset} x_j$, the quantity $\lim_{k\to\infty}k^{-1}b_k(H)$ was
called a cluster index of the time series $\{X_j,j\in\Zset\}$ by \cite{mikosch:wintenberger:2014}.
We extend the notion of cluster index to a large class of functionals for which the limit
$\lim_{k\to\infty}k^{-1}b_k(H)$ exists.
\begin{lemma}
  \label{lem:cluster-index-general}
  Assume that $\pr(\lim_{|t|\to\infty} |\vbY_t|=0)=1$.  Let $H$ be a shift invariant,
  $1$-homogeneous functional, {continuous on the injection of $(\Rset^d)^k$ into $\ell_0$ for
    every $k\geq1$} and such that $|H(\vby)|\leq C\norm[1]{\vby}$ for a constant $C>0$ and all
  $\vby\in(\Rset^d)^\Zset$. Then
  \begin{align}
    b_{k+1}(H) - b_k(H) = \esp[H_+^\alpha(\bsTheta_{0,k})-H_+^\alpha(\bsTheta_{1,k})]  \; .  \label{eq:cluster-k}
  \end{align}
  Assume moreover that $|H(\vbx)-H(\vby)| \leq C\norm[1]{\vbx-\vby}$ for all
    $\vbx,\vby\in(\Rset^d)^\Zset$.  If $\alpha>1$ assume in addition that
  \begin{align}\label{eq:sumThetaalpha-1}
    \esp \left[ \left(\sum_{j=0}^\infty \norm{\vbTheta_j}\right)^{\alpha-1} \right] < \infty \; .
  \end{align}
  Then,
  \begin{align}
    \lim_{k\to\infty} \frac{b_k(H)}{k} = \esp [ H_+^\alpha(\vbTheta_{0,\infty}) - H_+^\alpha(\vbTheta_{1,\infty})] 
    = \vartheta \esp[H_+^\alpha(\vbQ)]\; .  \label{eq:cluster-index}
  \end{align}
\end{lemma}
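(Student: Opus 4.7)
The plan is to first establish the identity~(\ref{eq:cluster-k}) by direct computation from the definition of $b_k(H)$, and then deduce~(\ref{eq:cluster-index}) from~(\ref{eq:cluster-k}) by a Cesaro--Stolz argument combined with dominated convergence, invoking \Cref{lem:identites-again} for the final equality.

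For~(\ref{eq:cluster-k}), I would start from $b_{k+1}(H)=\nu(\{H(\vby_{1,k+1})>1\})$ and use the shift-invariance of $\nu$, together with that of $H$ to realign the resulting embedding, to rewrite it as $\nu(\{H(\vby_{0,k})>1\})$. Splitting this set along $\{\vby_0=0\}$ versus $\{\vby_0\ne 0\}$, and using shift-invariance of $H$ on the first piece to identify $H(\vby_{0,k})=H(\vby_{1,k})$ there, one obtains the telescoping identity
\begin{align*}
b_{k+1}(H)-b_k(H) = \nu(\{H(\vby_{0,k})>1,\,\vby_0\ne 0\}) - \nu(\{H(\vby_{1,k})>1,\,\vby_0\ne 0\}).
\end{align*}
Applying the polar decomposition~(\ref{eq:polaire-tail-measure-0}) with the $1$-homogeneity of $H$ and the elementary identity $\int_0^\infty \ind{ra>1}\alpha r^{-\alpha-1}\rmd r = a_+^\alpha$ transforms each of these terms into $\esp[H_+^\alpha(\vbTheta_{s,t})]$ with $(s,t)\in\{(0,k),(1,k)\}$, which is~(\ref{eq:cluster-k}).

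For~(\ref{eq:cluster-index}), the Cesaro--Stolz lemma reduces the first equality to showing that $\esp[H_+^\alpha(\vbTheta_{0,k})-H_+^\alpha(\vbTheta_{1,k})]$ converges to $\esp[H_+^\alpha(\vbTheta_{0,\infty})-H_+^\alpha(\vbTheta_{1,\infty})]$ as $k\to\infty$. Since $\sum_{j=0}^\infty \norm{\vbTheta_j}<\infty$ a.s.\ (by~(\ref{eq:summability-theta}) when $\alpha\le 1$, and as a direct consequence of~(\ref{eq:sumThetaalpha-1}) when $\alpha>1$), the Lipschitz hypothesis forces the a.s.\ pointwise convergence of the integrand. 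For the dominating function, combine the bound~(\ref{eq:easy_bound}) with the observation that $\norm[1]{\vbTheta_{0,k}-\vbTheta_{1,k}}=\norm{\vbTheta_0}=1$: the integrand is bounded by $C^\alpha$ when $\alpha\le 1$, and by $\alpha C^\alpha\bigl(\sum_{j=0}^\infty \norm{\vbTheta_j}\bigr)^{\alpha-1}$ when $\alpha>1$, the latter being integrable by~(\ref{eq:sumThetaalpha-1}). The second equality in~(\ref{eq:cluster-index}) then follows by applying \Cref{lem:identites-again} to the functional $H_+$, which is non-negative, shift-invariant, $1$-homogeneous, and Lipschitz since $|a_+-b_+|\le|a-b|$.

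The main obstacle is producing an integrable majorant uniform in $k$ when $\alpha>1$; this is exactly where the hypothesis~(\ref{eq:sumThetaalpha-1}) is used. The key observation is that $\norm[1]{\vbTheta_{0,k}-\vbTheta_{1,k}}$ collapses to the constant~$1$, which together with the crude bound $H_+(\vbTheta_{s,k})\le C\sum_{j\ge 0}\norm{\vbTheta_j}$ yields a $k$-independent dominant.
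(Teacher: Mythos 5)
Your proposal is correct and follows essentially the same route as the paper: identifying $b_{k+1}(H)-b_k(H)$ as a tail-measure difference that vanishes on $\{\vby_0=0\}$, applying the polar decomposition~(\ref{eq:polaire-tail-measure-0}), and then passing to the limit by dominated convergence with the majorant $(\alpha\vee1)C^\alpha\bigl(\sum_{j\geq0}\norm{\vbTheta_j}\bigr)^{(\alpha-1)_+}$ coming from $\norm[1]{\vbTheta_{0,k}-\vbTheta_{1,k}}=1$. You are merely more explicit than the paper about the Cesaro--Stolz step and about deducing the final equality from \Cref{lem:identites-again} applied to $H_+$, both of which the paper leaves implicit.
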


\begin{proof}
  Without loss of generality, we assume that $H$ is non-negative. By definition of the tail measure
  and by stationarity, we have
  \begin{align*}
    b_k(H) & = \nu(\{H(\bsy_{1,k})>1\}) \; ,  \ \             b_{k+1}(H)  = \nu(\{H(\bsy_{0,k})>1\}) \; .
    \end{align*}
    Note that $\ind{H(\bsy_{0,k})>1} = \ind{H(\bsy_{1,k})>1}$ if $\bsy_0=0$.  Therefore, we can
    apply~(\ref{eq:polaire-tail-measure-0}) and obtain
    \begin{align*}
      b_{k+1}(H)-b_k(H) 
      & = \int_{\setE{d}} \left( \ind{H(\bsy_{0,k})>1} - \ind{H(\bsy_{1,k})>1} \right) \ind{\bsy_0\ne0} \nu(\rmd\bsy) \\
      & = \int_0^\infty \esp \left[ \left( \ind{rH(\bsTheta_{0,k})>1} -
        \ind{rH(\bsTheta_{1,k})>1} \right)  \right] \alpha r^{-\alpha-1} \rmd r  \\
      & = \esp[H^\alpha(\bsTheta_{0,k})- H^\alpha(\bsTheta_{1,k})] \; .
    \end{align*}
    We must now prove that the last expression has a limit when $k$ tends to infinity. By
    \eqref{eq:summability-theta} (if $\alpha\leq 1$) and \eqref{eq:sumThetaalpha-1} (if $\alpha>1$),
    $\sum_{j=0}^\infty \norm{\vbTheta_j}<\infty$ almost surely, so the assumption on $H$ implies
    that $H(\vbTheta_{0,k})$ converges almost surely to $H(\vbTheta_{0,\infty})$ which is well
    defined. Moreover, since $|\vbTheta_0|=1$, we obtain
    \begin{align*}
      \left| H^\alpha(\bsTheta_{0,k}) - H^\alpha(\bsTheta_{1,k}) \right|  
      \leq (\alpha\vee1) C^\alpha  \left(\sum_{j=0}^\infty \norm{\vbTheta_j}\right)^{(\alpha-1)_+} \; .
    \end{align*}
    Thus, under assumption~(\ref{eq:sumThetaalpha-1}), the limit~(\ref{eq:cluster-index}) holds by
    dominated convergence.
  \end{proof}

\begin{remark}
  As noted in \Cref{xmpl:indicator-is-important}, the fact that we integrate a function which
  vanishes when $\bsy_0=0$ is essential.  The identity (\ref{eq:cluster-k}) was obtained in
  \cite[Lemma~3.1]{mikosch:wintenberger:2014} by means of a rather lengthy proof which made repeated
  use of the definition of the spectral tail process and the time change formula.
\end{remark}

\begin{example}
  We pursue \Cref{xmpl:cluster-ruin}.  If $\pr(\lim_{|t|\to\infty}|Y_t| = 0)=1$ and
    \eqref{eq:sumThetaalpha-1} hold, then we can apply \Cref{lem:cluster-index-general} to the
    functionals of \Cref{xmpl:cluster-ruin} and obtain
  \begin{align}
  \label{eq:clusterindex-sum}
    \lim_{k\to\infty} \lim_{x\to\infty} \frac{\pr(X_1+\cdots+X_k>x)}{k\pr(|X_0|>x)} 
    &       = \esp \left[ \left(\sum_{j=0}^\infty  \Theta_j\right)_+^\alpha - \left(\sum_{j=1}^\infty  \Theta_j\right)_+^\alpha\right]  \; , \\
    \lim_{k\to\infty} \lim_{x\to\infty} \frac{\pr(\sup_{1\leq j \leq k} (X_1+\cdots+X_j)>x)}{k\pr(|X_0|>x)} 
    &  = \esp \left[ \left(\sup_{k\geq0} \sum_{j=0}^k  \Theta_j\right)_+^\alpha - \left(\sup_{k\geq1} \sum_{j=1}^k  \Theta_j\right)_+^\alpha\right]  \; .
  \end{align}
  The identity (\ref{eq:clusterindex-sum}) was proved for geometrically ergodic Markov chains by
  \cite[Theorem~3.2]{mikosch:wintenberger:2014}. 
\end{example}

\section{Convergence  of clusters}
\label{sec:emp-proc-clusters}
The quantities studied in \Cref{sec:tailprocesstozero} appear as limits of so-called cluster
functionals. To be precise, a cluster is a vector $\vbX_{1,r_n}=(\vbX_1,\dots,\vbX_{r_n})$ where
$\{r_n\}$ is a non decreasing sequence of integers such that $\lim_{n\to\infty} r_n=\infty$. The
vector $\vbX_{1,r_n}$ can be embedded in $(\Rset^d)^\Zset$ and cluster functional may be simply
defined as measurable function $H$ on $(\Rset^d)^\Zset$. Limiting theory for regularly varying time
series relies fundamentally on the convergence of functionals of renormalized clusters, that is the
convergence of
\begin{align}
  \frac{  \esp[H(c_n^{-1} \vbX_{1,r_n})]}{r_n\pr(\norm{\vbX_0}>c_n)} \; ,   \label{eq:espcluster}
\end{align}
where $\{c_n\}$ an increasing sequence such that $\lim_{n\to\infty} c_n = \infty$ and
\begin{align}
  \lim_{n\to\infty} r_n\pr(\norm{\vbX_0}>c_n) = 0 \; .  \label{eq:cn-domine-rn}
\end{align}
The convergence of the quantity in~(\ref{eq:espcluster}) has been established under various
conditions on the functions $H$, in particular some form of shift invariance, and more essentially
under the following so-called anticlustering condition, originally introduced in
\cite[Condition~(2.8)]{davis:hsing:1995}: for all $u>0$,
\begin{align}
  \lim_{m\to\infty} \limsup_{n\to\infty}  \pr\left( \max_{m\leq |i| \leq r_n} \norm{\bsX_i} > c_n u \mid \norm{\bsX_0}>c_nu\right) = 0 \; .
  \label{eq:anticlustering}
\end{align}
It is proved in \cite[Proposition~4.1]{basrak:segers:2009} that  (\ref{eq:anticlustering})
implies~(\ref{eq:Y_to_zero}), \ie\
$\pr \left( \lim_{|k|\to\infty} \norm{\bsY_k} = 0 \right) = 1$.  In full generality, condition
(\ref{eq:anticlustering}) cannot bring more information on the tail process since it is proved in
\cite{debicki:hashorva:2017} that for max-stable stationary processes with Fr\'echet marginal,
(\ref{eq:anticlustering}) and (\ref{eq:Y_to_zero}) are equivalent; see \Cref{sec:maxstableprocesses}.

In order to give a rigorous meaning to the convergence of clusters, following
\cite{basrak:planinic:soulier:2016}, we consider clusters as element of the space $\lo$ of shift
equivalent sequences. More precisely, we say that $\vbx\sim\vby$ if there exists $j\in\Zset$ such
that $\shift^j\vbx=\vby$. The space $\lo$ is the space of equivalence classes.  It is readily
checked that the space $\lo$ endowed with the metric $\dtilde$ defined by
\begin{align*}
  \dtilde (\tilde\vbx,\tilde\vby) = \inf_{\vbx\in\tilde\vbx,\vby\in\tilde\vby} \norm[\infty]{\vbx-\vby}
\end{align*}
is a complete separable metric space. See \cite[Lemma~2.1]{basrak:planinic:soulier:2016}.

Define the measure $\nu_{n,r_n}$ on $\lo$ by 
\begin{align*}
  \nu_{n,r_n} = \frac{\pr \left( c_n^{-1} \vbX_{1,r_n} \in \cdot \right)} {r_n\pr(\norm{\vbX_0}>c_n)}  \; .
\end{align*}
The convergence of the quantity in~(\ref{eq:espcluster}) can now be related to the convergence of
the measure $\nu_{n,r_n}$ on the space $\loo$ in the following sense.

Let $\mcm_0$ be the set of boundedly finite Borel measures on $\loo$, that is Borel measures $\mu$
such that $\mu(A)<\infty$ for all Borel sets $A\subset\lo$ which are bounded away from
$\tilde\bszero$ \ie\ for which there exists $\epsilon>0$ such that $\tilde\vbx\in A$ implies that
$\tilde\vbx^*>\epsilon$.

Following \cite{hult:lindskog:2006} or \cite[Chapter~4]{kallenberg:2017}, we say that a sequence of
measures $\mu_n\in\mcm_0$ converge to $\mu$ in $\mcm_0$ if
$\lim_{n\to\infty}\mu_n(f)=\mu(f)$ for all bounded continuous functions $f$ on $\loo$ with support
bounded away from zero. As shown in \cite[Lemma~4.1]{kallenberg:2017}, the class of test functions
can be restricted to Lipschitz continuous functions. Let $\nu^*$ be the measure defined on $\lzero$
by
\begin{align*}
  \nu^*(H) = \vartheta \int_0^\infty \esp[H(r\vbQ)] \alpha r^{-\alpha-1} \rmd r  \; ,
\end{align*}
for non negative measurable functions $H$ defined on $\lzero$.  Since $\vbQ^*=1$, the measure
$\nu^*$ is boundedly finite on $\lzero$. By \Cref{prop:loi-Q}, if $H$ is shift invariant then
$\nu^*(H)$ has the alternative expression
\begin{align*}
  \nu^*(H)=\int_0^\infty \esp[H(r\vbTheta)\ind{I(\vbTheta)=0}]\alpha r^{-\alpha-1} \rmd r \; .
\end{align*}
By a slight abuse of notation, in the following we consider $\nu^*$ as a measure on $\lo$. It is
proved in \cite[Lemma~3.3]{basrak:planinic:soulier:2016} that if $\vbX$ is a stationary regularly
varying time series with tail measure $\nu$ and which satisfies condition (\ref{eq:anticlustering}),
then $\nu_{n,r_n} \to \nu^*$ in $\mcm_0$. This convergence implies that for all bounded measurable
shift invariant functions $H$ on $\lzero$ (which can be identified with functions on $\lo$) with
support bounded away from zero and almost surely continuous \wrt\ $\nu^*$,
\begin{align*}
  \lim_{n\to\infty} \frac{\esp[H(c_n^{-1} \vbX_{1,r_n})]}{r_n\pr(\norm{\vbX_0}>c_n)} = \lim_{n\to\infty} \nu_{n,r_n}(H) = \nu^*(H) \; . 
\end{align*}
This approach unifies and extends similar results in \cite{basrak:segers:2009} and
\cite{mikosch:wintenberger:2014,mikosch:wintenberger:2016}. The extension of the above convergence
to unbounded functions or functions whose support is not bounded away from $\bszero$ can be obtained
by usual uniform integrability arguments.

The previous results were proved under the anticlustering condition (\ref{eq:anticlustering}). In
particular, as already mentioned, this always implies that the tail process tends to zero at
infinity. However, whereas for most time series models (such as linear models or solutions to
stochastic recurrence equations), it is relatively easily checked that the tail process tends to
zero, proving condition~(\ref{eq:anticlustering}) is relatively hard and may require very stringent
conditions.

We next show that the anticlustering condition (\ref{eq:anticlustering}) is actually not
essential. Recall that $\pr \left( \lim_{|k|\to\infty} \norm{\bsY_k} = 0 \right) = 1$ is equivalent
to tail measure $\nu$ being supported on $\lzero$.

\begin{lemma}
  \label{lem:anticlustering-useless}
  Let $\vbX$ be a regularly varying time series with tail measure $\nu$ supported on $\lzero$.  Then
  for every sequence $\{c_n\}$ such that $\lim_{n\to\infty} c_n=\infty$, there exists a non
  decreasing sequence of integers $\{r_n\}$ such that (\ref{eq:cn-domine-rn}) holds and
  $\nu_{n,r_n} \to \nu^*$ in $\mcm_0$.
\end{lemma}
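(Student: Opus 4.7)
The plan is to reduce to Lemma~3.3 of \cite{basrak:planinic:soulier:2016}, which already gives $\nu_{n,r_n}\to\nu^*$ in $\mcm_0$ whenever $\{r_n\}$ satisfies both (\ref{eq:cn-domine-rn}) and the anticlustering condition~(\ref{eq:anticlustering}). Accordingly, it suffices to exhibit a non-decreasing integer sequence $\{r_n\}$ satisfying both. The building block is the pointwise limit, obtained by applying regular variation with the rescaled threshold $\tilde c_n:=c_nu$ in place of $c_n$: for every $u>0$ and integers $m\leq R$,
\begin{align*}
  \lim_{n\to\infty}\pr\!\left(\max_{m\leq|i|\leq R}\norm{\vbX_i}>c_nu\,\Big|\,\norm{\vbX_0}>c_nu\right)
  =\pr\!\left(\max_{m\leq|i|\leq R}\norm{\vbY_i}>1\right)\leq p_m,
\end{align*}
where $p_m:=\pr(\sup_{|i|\geq m}\norm{\vbY_i}>1)$. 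By \Cref{lem:0-infty}, the hypothesis that $\nu$ is supported on $\lzero$ is equivalent to $\pr(\lim_{|i|\to\infty}\norm{\vbY_i}=0)=1$, so $p_m\to 0$ as $m\to\infty$.

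I build $\{r_n\}$ by a standard diagonal extraction. For each integer $k\geq 1$ choose $N_k<N_{k+1}$ large enough that (a) $\pr(\norm{\vbX_0}>c_n)\leq 1/k^{2}$ for every $n\geq N_k$, and (b) for every pair of integers $m\leq R\leq k$, every $u\in\{1,\tfrac12,\dots,\tfrac1k\}$ and every $n\geq N_k$, the displayed pointwise limit holds with error at most $1/k$. Only finitely many triples $(m,R,u)$ are involved at stage $k$, so such $N_k$ exists. Setting $k(n)=\max\{k:N_k\leq n\}$ and $r_n=k(n)$ produces a non-decreasing sequence with $r_n\to\infty$, and for $n\in[N_k,N_{k+1})$ one has $r_n\pr(\norm{\vbX_0}>c_n)=k\,\pr(\norm{\vbX_0}>c_n)\leq 1/k$, so~(\ref{eq:cn-domine-rn}) holds.

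To verify~(\ref{eq:anticlustering}), fix $m\geq 1$ and first take $u=1/\ell$ with $\ell\in\Nset$. For $n\in[N_k,N_{k+1})$ with $k\geq\max(m,\ell)$, step~(b) and $r_n=k$ yield
\begin{align*}
  \pr\!\left(\max_{m\leq|i|\leq r_n}\norm{\vbX_i}>c_nu\,\Big|\,\norm{\vbX_0}>c_nu\right)
  \leq \pr\!\left(\max_{m\leq|i|\leq r_n}\norm{\vbY_i}>1\right)+\frac{1}{k}\leq p_m+\frac{1}{k}.
\end{align*}
Letting $n\to\infty$ and then $m\to\infty$ delivers anticlustering at every $u=1/\ell$. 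For arbitrary $u>0$, pick $\ell$ with $1/\ell\leq u$; the set inclusion $\{\max_{m\leq|i|\leq r_n}\norm{\vbX_i}>c_nu,\norm{\vbX_0}>c_nu\}\subset\{\max_{m\leq|i|\leq r_n}\norm{\vbX_i}>c_n/\ell,\norm{\vbX_0}>c_n/\ell\}$, combined with the convergence $\pr(\norm{\vbX_0}>c_n/\ell)/\pr(\norm{\vbX_0}>c_nu)\to(u\ell)^\alpha$, propagates anticlustering from $u=1/\ell$ to $u$. Lemma~3.3 of \cite{basrak:planinic:soulier:2016} then delivers $\nu_{n,r_n}\to\nu^*$ in $\mcm_0$.

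The main obstacle is the tension inherent in the choice of $\{r_n\}$: it must tend to infinity yet remain negligible against $1/\pr(\norm{\vbX_0}>c_n)$, and simultaneously grow slowly enough that the anticlustering truncation error---controlled only through the pointwise finite-dimensional regular variation, uniformly in both $(m,R)$ and the threshold $u$---vanishes. The slowly-growing choice $r_n=k(n)$ arising from the diagonal extraction reconciles all three constraints.
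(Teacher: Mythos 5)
Your argument is correct, but it takes a genuinely different route from the paper. The paper never touches the anticlustering condition: for each fixed block length $m$ it lets $n\to\infty$ to obtain a limiting block measure $\nu_m^*$, proves $\nu_m^*\to\nu^*$ in $\mcm_0$ directly (reducing to Lipschitz test functions, then to functions depending only on coordinates exceeding some $\eta>0$ via the truncation $T_\eta$, and finally a double dominated-convergence argument exploiting $\norm{\vbY_j}\to0$), and then extracts $r_n$ by a diagonal argument using the metrizability of $\mcm_0$ convergence; condition~(\ref{eq:cn-domine-rn}) is deduced \emph{a posteriori} from~(\ref{eq:maximumconvergence}). You instead engineer $r_n$ up front so that both~(\ref{eq:cn-domine-rn}) and the anticlustering condition~(\ref{eq:anticlustering}) hold, and then delegate the $\mcm_0$ convergence to Lemma~3.3 of \cite{basrak:planinic:soulier:2016}. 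Your route is shorter given that cited lemma and yields a stronger byproduct worth noting: whenever the tail process vanishes at infinity, the anticlustering condition itself can always be arranged for a suitably slowly growing $\{r_n\}$, which dovetails with the paper's remark that~(\ref{eq:anticlustering}) is ``nearly necessary and sufficient.'' The paper's route is self-contained within the tail-measure framework and does not outsource the convergence. Two small points to tighten: the equality in your displayed limit requires $\{\max_{m\leq|i|\leq R}\norm{\vby_i}>1\}$ to be a continuity set of $\law(\vbY)$, which holds since $\norm{\vbY_0}$ is Pareto and independent of $\vbTheta$ so that $\pr(\norm{\vbY_i}=1)=0$ for all $i$ (alternatively, the one-sided portmanteau bound with $\geq 1$ in place of $>1$ already suffices for your purposes); and you should note that the version of Lemma~3.3 you invoke is the one stated in this section for a general sequence $c_n\to\infty$ satisfying~(\ref{eq:cn-domine-rn}), rather than for the specific normalization $n\pr(\norm{\vbX_0}>c_n)\to 1$ used in the original reference.
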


\begin{proof}
  For each integer $m\geq1$ and for every non negative shift invariant function $H$ on $\lzero$ such
  that $H(\vbx)=0$ if $\vbx^*\leq\epsilon$ and continuous with respect to the distribution of
  $\vbY$, we have, by regular variation,
  \begin{align*}
    \lim_{n\to\infty} \frac{\esp[H(c_n^{-1}\vbX_{1,m})]}{m\pr(\norm{\vbX_0}>c_n)} = 
    \frac{\epsilon^{-\alpha}}m \sum_{j=1}^m \esp[H(\epsilon\vbY_{1-j,m-j})\ind{\vbY_{1-j,-1}^*\leq1}] \; .
  \end{align*}
  The limit is independent of $\epsilon$ and therefore defines a boundedly finite measure $\nu_m^*$
  on $\lo\setminus\{\bszero\}$.  By \cite[Lemma~4.1]{kallenberg:2017}, it suffices to prove that for
  all bounded Lipshitz continuous (with respect to the uniform norm) functions $H$ on $\loo$ with
  support bounded away from zero, we have
  \begin{align*}
    \lim_{m\to\infty} \nu_m^*(H)  &  = \nu^*(H) \; .
  \end{align*}
  The class of test functions can be further restricted to functions which depend only on coordinate
  greater than some $\eta>0$. Indeed, let $T_\eta$ be the operator on $\lzero$ which puts all
  coordinates no greater than $\eta$ to zero:
  \begin{align*}
    T_\eta(\vbx) = (\vbx_j\ind{\norm{\vbx_j}>\eta})_{j\in\Zset} \; ,
  \end{align*}
  and identify $T_\eta$ to an operator on $\loo$ in an obvious way. Then if $H$ is Lipshitz
  continuous, there exists a constant $C$ (depending only on $H$) such that for all $\vbx\in\lzero\setminus\{\bszero\}$,
  \begin{align*} 
    |H(\vbx)-H\circ T_\eta(\vbx)|\leq C \eta  \; . 
  \end{align*}
  Moreover, $H\circ T_\eta$ is almost surely continuous with respect to the distribution of $\vbY$
  since $\norm{\vbY_0}$ has a continuous distribution and is independent of $\vbTheta$ so
  $\pr(\exists j \in \Zset, \norm{\vbY_j}=\eta)=0$ for all $\eta>0$. Consider now a function $H$ with
  support bounded away from $\bszero$, which depends only on coordinates greater than $\eta$
  (that is such that $H = H\circ T_\eta$) and almost surely continuous with respect to the
  distribution of $\vbY$. Then we can write 
  \begin{align*}
    \nu_m^*(H) = 
    \frac{\epsilon^{-\alpha}}m \sum_{j=1}^m \esp[H(\epsilon\vbY_{1-j,m-j})\ind{\vbY_{1-j,-1}^*\leq1}] 
    = \epsilon^{-\alpha} \int_0^1 g_m(t) \rmd t 
  \end{align*} 
  with
  $g_m(t) = \esp[H(\epsilon\vbY_{1-\lceil mt\rceil,m-\lceil mt\rceil}) \ind{\vbY_{1-\lceil
      mt\rceil,-1}^*\leq1}]$
  (where $\lceil x\rceil$ denotes the smallest integer larger than or equal to the real number $x$).
  Since $H$ is shift invariant, depends only on coordinates greater than $\eta$ and
  $\pr(\lim_{|j|\to\infty} \norm{\vbY_j}=0)=1$, for every $t\in(0,1)$, it holds that
  \begin{align*}
    H(\epsilon\vbY_{1-\lceil mt\rceil,m-\lceil mt\rceil})\ind{\vbY_{1-\lceil mt\rceil,-1}^*\leq1} = H(\epsilon\vbY)\ind{\vbY_{1-\lceil mt\rceil,-1}^*\leq1}
  \end{align*}
  for large enough $m$. Also,
  $\lim_{m\to\infty} H(\epsilon\vbY)\ind{\vbY_{1-\lceil mt\rceil,-1}^*\leq1} =
  H(\epsilon\vbY)\ind{\vbY_{-\infty,-1}^*\leq1}$. Since $H$ is bounded, we obtain by dominated convergence that
  \begin{align*}
    \lim_{m\to\infty} g_m(t) = \esp[H(\epsilon\vbY)\ind{\vbY_{-\infty,-1}^*\leq1}] \; , 
  \end{align*}
  for all $t\in(0,1)$. The functions $g_m$ are uniformly bounded thus by dominated convergence again, we obtain
  \begin{align*}
    \lim_{m\to\infty} \nu_m^*(H) 
    & =  \epsilon^{-\alpha} \lim_{m\to\infty} \int_0^1 g_m(t)  \rmd t  
      = \epsilon^{-\alpha} \esp[H(\epsilon\vbY)\ind{\vbY_{-\infty,-1}^*\leq1}] = \nu^*(H) \; .
  \end{align*}
  This proves that $\nu_m^*$ converges to $\nu^*$ in $\mcm_0$.  Since convergence in $\mcm_0$ is
  metrizable (cf. \cite[Theorem~2.3]{hult:lindskog:2006}, there exists a sequence $r_n$ such that
  $\nu_{n,r_n}\to \nu^*$.

  As a consequence, we obtain for all $u>0$,
  \begin{align}
    \label{eq:maximumconvergence}
    \lim_{n\to\infty} \frac{\pr(\vbX_{1,r_n}^*>c_nu)}{r_n\pr(\norm{\vbX_0}>c_n)} = \vartheta u^{-\alpha} \; . 
  \end{align}
  This in turn implies that $\lim_{n\to\infty} r_n\pr(\norm{\vbX_0}>c_n)=0$. Otherwise
    $r_n\pr(\norm{\vbX_0}>c_n)\to c\in(0,\infty]$ (possibly along a subsequence) which implies that
    $\pr(\vbX_{1,r_n}^*>c_nu)\to c\vartheta u^{-\alpha}$ for all $u>0$. This is impossible since
    the latter quantity is greater than 1 for small $u$. 
\end{proof}
The convergence~(\ref{eq:maximumconvergence}) was proved under condition~(\ref{eq:anticlustering})
by \cite{basrak:segers:2009}.  Here, we have bypassed the anticlustering condition
(\ref{eq:anticlustering}).  The sequence $\{r_n\}$ is not explicitely known, but neither is it when
condition (\ref{eq:anticlustering}) is simply assumed as often happens in the literature.

We next show that the convergence $\nu_{n,r_n} \to \nu^*$ is equivalent to
\eqref{eq:maximumconvergence} and convergence in distribution of the normalized block
$(\vbX_{1,r_n}^*)^{-1}\vbX_{1,r_n}$ conditionally on $\vbX_{1,r_n}^*>c_nu$ in $\lo$ to the sequence
$\vbQ$. Note that, since the convergence takes place in the space $\lo$, by \Cref{prop:loi-Q} the
limit has the same distribution as $\vbTheta$ conditionally on $ I(\vbTheta)=0$.

\begin{lemma}
  \label{lem:clusterconvergence}
  Let $\vbX$ be a stationary regularly varying time series with tail measure $\nu$ supported on
  $\lzero$ and let $\{c_n\}$ and $\{r_n\}$ be sequences satisfying \eqref{eq:cn-domine-rn}. Then
  $\nu_{n,r_n} \to \nu^*$ in $\mcm_0$ if and only if for every $u>0$ \eqref{eq:maximumconvergence} holds and 
 \begin{align}
   \label{eq:normalizedclusterconvergence}
   \law\left((\vbX_{1,r_n}^*)^{-1}\vbX_{1,r_n} \mid
   \vbX_{1,r_n}^*>c_nu\right)  \wto \law(\vbQ)
    \end{align}
 as $n\to\infty$ in $\lo$.
\end{lemma}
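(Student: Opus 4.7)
Set $R_n = c_n^{-1}\vbX_{1,r_n}^*$ and $\vbZ_n = \vbX_{1,r_n}/\vbX_{1,r_n}^*$, viewed as an element of $\lo$, so that $c_n^{-1}\vbX_{1,r_n} = R_n\vbZ_n$ with $\vbZ_n^* = 1$; recall that $\vbQ^* = 1$ almost surely as well. The measure $\nu^*$ is the pushforward, under $(r, \tilde\vbx) \mapsto r\tilde\vbx$, of $\vartheta \alpha r^{-\alpha-1}\rmd r$ tensored with $\law(\vbQ)$, so a point drawn from $\nu^*$ factors as the product of an independent Pareto radial coordinate and $\vbQ$.

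Forward direction: assume $\nu_{n,r_n} \to \nu^*$ in $\mcm_0$ and set $A_u = \{\tilde\vbx : \tilde\vbx^* > u\}$. This set is bounded away from $\tilde\bszero$ and has $\nu^*(\partial A_u) = 0$ because the radial component of $\nu^*$ is absolutely continuous, so $\nu_{n,r_n}(A_u) \to \nu^*(A_u) = \vartheta u^{-\alpha}$, which is \eqref{eq:maximumconvergence}. For the angular part, apply the convergence to bounded continuous test functions of the form $\tilde\vbx \mapsto g(\tilde\vbx/\tilde\vbx^*)\phi(\tilde\vbx^*)$ with $g$ bounded continuous on $\lo$ and $\phi$ continuous, supported in $(u,\infty)$; both sides factor, and dividing by the $g\equiv1$ case and sandwiching $\phi$ between continuous approximations of $\mathbbm{1}\{r>u\}$ yields $\esp[g(\vbZ_n)\mid R_n>u] \to \esp[g(\vbQ)]$, which is \eqref{eq:normalizedclusterconvergence}.

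Reverse direction: assume \eqref{eq:maximumconvergence} and \eqref{eq:normalizedclusterconvergence}. By \cite[Lemma~4.1]{kallenberg:2017} it suffices to prove $\nu_{n,r_n}(H) \to \nu^*(H)$ for every bounded Lipschitz $H$ on $\loo$ with support in some $A_u$. Since $c_n^{-1}\vbX_{1,r_n} = R_n\vbZ_n$,
\[
\nu_{n,r_n}(H) = \frac{\pr(R_n > u)}{r_n\pr(\norm{\vbX_0}>c_n)}\, \esp[H(R_n\vbZ_n) \mid R_n > u],
\]
and the prefactor tends to $\vartheta u^{-\alpha}$ by \eqref{eq:maximumconvergence}. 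The key step is to upgrade \eqref{eq:normalizedclusterconvergence} to joint convergence of $(R_n, \vbZ_n)$ conditional on $R_n > u$ towards $(R, \vbQ)$, where $R$ is Pareto of index $\alpha$ on $[u,\infty)$ and is independent of $\vbQ$. Independence follows from the factorization
\[
\pr(R_n > v, \vbZ_n \in B \mid R_n > u) = \frac{\pr(R_n > v)}{\pr(R_n > u)}\, \pr(\vbZ_n \in B \mid R_n > v)
\]
for $v \geq u$, combined with \eqref{eq:maximumconvergence} (first factor, giving $(u/v)^\alpha$) and \eqref{eq:normalizedclusterconvergence} applied at level $v$ (second factor). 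Continuous mapping through $(r,\tilde\vbx)\mapsto H(r\tilde\vbx)$ then gives
\[
\esp[H(R_n\vbZ_n)\mid R_n>u] \to \esp[H(R\vbQ)] = \int_u^\infty \esp[H(r\vbQ)]\, \alpha u^\alpha r^{-\alpha-1}\rmd r,
\]
and multiplying by $\vartheta u^{-\alpha}$ recovers $\nu^*(H)$ exactly, since $H(r\vbQ) = 0$ for $r \leq u$.

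The main obstacle is the joint convergence with independence in the reverse direction; as indicated, it reduces to the product formula for conditional probabilities together with the fact that \eqref{eq:normalizedclusterconvergence} is assumed for every positive threshold. The remaining technicalities, namely the use of continuity sets for $\mcm_0$-convergence and the passage from continuous $\phi$ to indicators in the forward direction, are routine consequences of the absolute continuity of the radial component of $\nu^*$.
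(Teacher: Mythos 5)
Your proof is correct and follows essentially the same route as the paper's: the forward direction tests $\nu_{n,r_n}\to\nu^*$ against functions of the normalized cluster restricted to $\{\tilde\vbx^*>u\}$, and the reverse direction rests on exactly the same factorization $\pr(R_n>v,\vbZ_n\in B\mid R_n>u)=\frac{\pr(R_n>v)}{\pr(R_n>u)}\pr(\vbZ_n\in B\mid R_n>v)$, exploiting that (\ref{eq:normalizedclusterconvergence}) is assumed at every threshold to get asymptotic independence of the Pareto radius and $\vbQ$. The only cosmetic difference is that you carry out in-line the final step (passing from the conditional weak convergence to $\mcm_0$-convergence of $\nu_{n,r_n}$) that the paper delegates to an external lemma of \cite{basrak:planinic:soulier:2016}.
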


\begin{proof}
  Assume first that for every $u>0$, \eqref{eq:maximumconvergence} and
  \eqref{eq:normalizedclusterconvergence} hold. It suffices to show that then for every $u>0$
\begin{align}
   \label{eq:clusterconvergence}
   \law\left((c_n u)^{-1}\vbX_{1,r_n} \mid
   \vbX_{1,r_n}^*>c_nu\right)  \wto \law(Y \cdot \vbQ)\;  
 \end{align}
 in $\lo$, where $Y$ is a Pareto distributed random variable independent of $\vbQ$, since the fact
 that \eqref{eq:maximumconvergence} and \eqref{eq:clusterconvergence} imply $\nu_{n,r_n} \to \nu^*$
 follows as in \cite[Lemma 3.2]{basrak:planinic:soulier:2016}. Fix $u>0$ and take an arbitrary
 $v\geq 1$ and a Borel subset $B$ of $\loo$ such that $\pr(\vbQ\in \partial B)=0$. Then by
 \eqref{eq:maximumconvergence}, \eqref{eq:normalizedclusterconvergence} and regular variation of
 $|\vbX_{0}|$, as $n\to \infty$
  \begin{multline*}
  \pr\left(\vbX_{1,r_n}^*>c_nuv,(\vbX_{1,r_n}^*)^{-1}\vbX_{1,r_n} \mid
   \vbX_{1,r_n}^*>c_nu \right)\\
   = \frac{\pr\left(\vbX_{1,r_n}^*>c_nuv\right)}{\pr\left(\vbX_{1,r_n}^*>c_nu \right)} \cdot \pr\left((\vbX_{1,r_n}^*)^{-1}\vbX_{1,r_n} \mid
   \vbX_{1,r_n}^*>c_nuv\right) \\
   \to v^{-\alpha} \cdot \pr(\vbQ \in B) \; .
\end{multline*}    
This implies that for every $u>0$ 
\[
\law\left((c_n u)^{-1}\vbX_{1,r_n}^*,(\vbX_{1,r_n}^*)^{-1}\vbX_{1,r_n} \mid
   \vbX_{1,r_n}^*>c_nu\right)\wto \law(Y,\vbQ)
   \]
 in $(1,\infty)\times \lo$ and \eqref{eq:clusterconvergence} now follows by an continuous mapping argument.
  
 For the converse, assume that $\nu_{n,r_n} \to \nu^*$ in $\mcm_0$. As already noted in the proof of
 Lemma \ref{lem:anticlustering-useless}, this implies that \eqref{eq:maximumconvergence} holds for
 every $u>0$. Fix an $u>0$ and take an arbitrary bounded continuous function $H$ on $\lo$. Note that
 the function $\tilde{\vby} \mapsto H((\tilde{\vby}^*)^{-1} \tilde{\vby})\ind{\tilde{\vby}^*>u}$ on
 $\loo$ is bounded, has support bounded away from $\bszero$ and is almost surely continuous with
 respect to $\nu^*$ since $\nu^*(\{\tilde{\vby}: \: \tilde{\vby}^*=u \})=0$ by the definition of
 $\nu^*$ and the fact that $\vbQ^*=1$. Now by the convergence $\nu_{n,r_n} \to \nu^*$ in $\mcm_0$
 and \eqref{eq:maximumconvergence}, as $n\to \infty$
 \begin{multline*}
   \esp\left[H((\vbX_{1,r_n}^*)^{-1}\vbX_{1,r_n}) \mid  \vbX_{1,r_n}^*>c_nu\right]
   \\
   =\frac{r_n\pr(\norm{\vbX_0}>c_n)}{\pr(\vbX_{1,r_n}^*>c_nu)}\cdot\frac{\esp\left[H((\vbX_{1,r_n}^*)^{-1}\vbX_{1,r_n})\ind{
     \vbX_{1,r_n}^*>c_nu}\right]}{r_n\pr(\norm{\vbX_0}>c_n)}\\
   \to \vartheta^{-1} u^{\alpha} \int_{\loo}H((\tilde{\vby}^*)^{-1}\tilde{\vby})\ind{\tilde{\vby}^*>u} \nu^*(\rmd\tilde{\vby})=\esp[H(\vbQ)] \; .
 \end{multline*}

\end{proof}

Assume now that $n\pr(\norm{\vbX_0}>c_n)\sim 1$ and that $\{c_n\}$ and $\{r_n\}$ satisfy
  the assumption of \Cref{lem:anticlustering-useless}.  Define $k_n=[n/r_n]$,
  $\vbX_{n,i} = c_n^{-1}(X_{(i-1)r_n+1},\dots,X_{ir_n})$, $i=1,\dots,k_n$ and the point process of
  clusters
\begin{align*}
  \PPC_n = \sum_{i=1}^{k_n} \delta_{\frac{i}{k_n},\vbX_{n,i}} \; .
\end{align*}
The point process $\PPC_n$ is a generalization introduced in \cite{basrak:planinic:soulier:2016} of
the point process of exceedences $N_n=\sum_{k=1}^n \delta_{X_k/c_n}$ and of its functional version
$N_n=\sum_{k=1}^n \delta_{i/n,X_k/c_n}$ considered in \cite{davis:hsing:1995} and
\cite{basrak:krizmanic:segers:2012}.  The convergence of these point processes is a central tool in
obtaining limit theorems for heavy tailed time series.

The convergence of $\PPC_n$ to a Poisson point process on~$[0,1]\times\loo$ with mean measure
$\mathrm{Leb} \times \nu^*$ is proved in \cite[Theorem~3.6]{basrak:planinic:soulier:2016} under the
anticlustering condition~(\ref{eq:anticlustering}) and the following mixing condition:
\begin{align}
  \esp\left[ \rme^{-\PPC_n(f)}\right ] - \prod_{i=1}^{k_n} \esp \left[ \rme^{-f(i/k_n,\vbX_{n,i})} \right] \to 0  \; ,  \label{eq:Asecond}
\end{align}
where $f$ is a continuous non negative function on $[0,1]\times\loo$ with support bounded away from
$[0,1]\times\{\tilde\bszero\}$ and $\vbX_{n,i}$ is identified to an element of $\lo$. The condition
(\ref{eq:Asecond}) has been shown in \cite[Lemma~6.5]{basrak:planinic:soulier:2016} to hold under
$\beta$-mixing and it probably also holds under $\alpha$-mixing. However, many processes of interest
are neither $\beta$- nor $\alpha$-mixing, for instance, linear processes without stringent
assumptions on the distribution of the innovation or long memory linear processes.

It must be noted that if $\nu_{n,r_n}\to\nu^*$ in $\mcm_0$, then condition~(\ref{eq:Asecond}) is a
necessary and sufficient condition for the convergence of $\PPC_n$ to a Poisson point process $\PPC$
with mean measure $\mathrm{Leb} \times \nu^*$ on $[0,1]\times\loo$. Indeed, the convergence
$\nu_{n,r_n}\to\nu^*$ implies that
\begin{align*}
  \lim_{n\to\infty}\prod_{i=1}^{k_n} \esp \left[ \rme^{-f(i/k_n,\vbX_{n,i})} \right] = \esp\left[\rme^{-\PPC(f)}\right]  \; ,  
\end{align*}
for all functions $f$ as before (cf. \cite[Proposition 3.21]{resnick:1987}) and this is also the
limit of $\esp\left[\rme^{-\PPC_n(f)}\right]$ if $\PPC_n$ converges weakly to $\PPC$. Thus, the two
quantities in (\ref{eq:Asecond}) have the same limit and their difference tends to zero.

Consider again linear processes.  Since the convergence $\PPC_n$ to $\PPC$ is known to hold for
any~$r_n$ such that $r_n\to\infty$ and $r_n/n\to 0$, (by an argument of $m$-dependent approximation,
cf. \cite[Proposition~3.8]{basrak:planinic:soulier:2016}), thus (\ref{eq:Asecond}) holds (for the
same sequence $r_n$ we get from \Cref{lem:anticlustering-useless}) even when the linear process is
not mixing.  

In view of these remarks, it is not suprising that conditions~(\ref{eq:anticlustering})
and~(\ref{eq:Asecond}) are relatively hard to check since they are nearly necessary and sufficient
conditions for the point process convergence. Unfortunately, no more easily checked sufficient
conditions (other than $\beta$-mixing) are known.

\section{Max-stable processes with a given tail measure}
\label{sec:maxstableprocesses}
In this section, we recall some connections between max-stable processes and spectral tail
processes. In particular, we provide an alternative construction based on the tail measure of
\cite[Theorem~3.2]{janssen:2017} which states that given a non negative process $\vbTheta$
satisfying the time change formula and $\pr(\Theta_0=1)=1$, there exists a stationary max-stable
process with spectral tail process $\vbTheta$. For brevity, we only consider non negative real
valued max-stable processes. The extension to the $d$-dimensional case is straightforward.  We only
consider the case $\lim_{|j|\to\infty}\Theta_j=0$ here. The general (non negative) case is
considered in \cite[Theorem~4.2]{janssen:2017}. Further generalizations in connection with the tail
measure are considered in \cite{dombry:hashorva:soulier:2017}.

We first recall some results about stationary max-stable processes with Fr\'echet marginals.  Let
$\bzeta$ be a max-stable process which admits the representation
\begin{align}
  \zeta_j = \bigvee_{i=1}^\infty P_i Z_j^{(i)} \; , \ \ j\in\Zset \; ,  \label{eq:def-zeta-Z}
\end{align}
where $\sequ{P}[i][\Nset]$ are the points of a Poisson point process on $(0,\infty)$ with mean
measure $\alpha x^{-\alpha-1}\rmd x$ and $\sequ{Z^{(i)}}$, $i\geq1$ are \iid\ copies of a non
negative process $\bsZ$ such that $\esp[Z_j^\alpha]=1$ for all $j\in\Zset$. The marginal
distributions are standard $\alpha$-Fr\'echet and the condition for stationarity is that $\bsZ$
satisfies
\begin{align*}
  \esp \left[ \bigvee_{i=s}^t \frac{Z_i^\alpha}{x_i^\alpha} \right] 
  = \esp \left[ \bigvee_{i=s}^t \frac{Z_{i-k}^\alpha}{x_i^\alpha} \right]   \; , 
\end{align*}
for all $k,s\leq t\in\Zset$ and $x_i>0$ for $i=s,\dots,t$.  The marginal distribution of $\zeta_0$
is unit Fr\'echet, the process $\bzeta$ is regularly varying and it is proved in
\cite[Section~6.2]{hashorva:2016} that the distribution of its spectral tail process $\vbTheta$ is
given, for all $h\in\Zset$ and bounded measurable functions $F$ on $(\Rset^d)^\Zset$, by
\begin{align}
  \esp[F(\vbTheta)] =  \esp[Z_{-h}^\alpha F(\shift^hZ/Z_{-h})\ind{Z_{-h}\ne0}]  \; . \label{eq:Z-theta}
\end{align}
It is also proved in \cite[Section~6.2]{hashorva:2016} that the distribution of $\bzeta$ is
characterized by its spectral tail process via the infargmax formula:
\begin{align}
  \label{eq:infargmax-formula}
  -\log \pr\left( \zeta_j\leq y_j,j\in\Zset\right) 
  & = \sum_{h\in\Zset} \frac1{y_h} \pr \left( \inf\arg\max_{j\in\Zset}\frac{\Theta_{j}}{y_{j+h}} = 0
    \right) \; , 
\end{align}
where only finitely many of the positive numbers $y_j$ are finite. 

Furthermore, \cite{debicki:hashorva:2017} proved that the process $\bzeta$ satisfies the
anticlustering condition (\ref{eq:anticlustering}) for any sequences $\{c_n\}$ and $\{r_n\}$
  such that $\lim_{n\to\infty} r_n\pr(\zeta_0>c_n)=\lim_{n\to\infty} r_n c_n^{-\alpha} =0$ if and
only if its tail process tends to zero, \ie\ (\ref{eq:Y_to_zero}) holds and that in that case the
candidate extremal index $\vartheta$ is the true extremal index, \ie\
\begin{align*}
  \lim_{n\to\infty} \pr\left(\max_{1\leq i \leq n} \zeta_i \leq n^{1/\alpha} x\right) = \rme^{-\vartheta x^{-\alpha}} \; .
\end{align*}

\cite{janssen:2017} proves that given a non negative sequence $\boldsymbol\Theta$ which satisfies
$\Theta_0=1$ and the time change formula, there exists a max-stable process $\bzeta$ whose spectral
tail process is~$\vbTheta$. We provide a proof of this fact based on the tail measure when the tail
process tends to zero.

Let $\vbTheta$ be a non negative sequence wich satisfies the time change
formula~(\ref{eq:time-shift-Theta}) and such that $\pr(\Theta_0=1)=1$ and
$\lim_{|j|\to\infty} \Theta_j=0$. Define $\vartheta=\pr(I(\Theta)=0)$ and the measure $\nu$ on
$\Rset^\Zset$ by
\begin{align}
  \label{eq:def-nu}
  \nu(H) = \sum_{j\in\Zset} \int_0^\infty \esp[H(r\shift^j\vbTheta)\ind{I(\vbTheta)=0}] \alpha r^{-\alpha-1} \rmd r   \; .
\end{align}
Let $\sum_{i\geq1} \delta_{W^{(i)}}$ be a Poisson point process on $[0,\infty)^\Zset$ with mean
measure $\nu$. Define the max-stable process $\bzeta$ by
\begin{align} 
  \label{eq:def-zeta}
  \zeta_j = \bigvee_{i\geq1} W_j^{(i)} \; , \ j\in\Zset \; .
\end{align}
Let $Y_0$ be a Pareto random variable independent of $\vbTheta$ and define
$\vbY=Y_0\vbTheta$. Let $\vbQ$ be as in
\Cref{def:def-sequenceQ}. The following result proves the existence of a max-stable process with a
given spectral tail process and provides an M3 representation for it.  For a review of the M3
  representation of max-stable processes, see \cite{dombry:kabluchko:2016}.

\begin{theorem}
  \label{prop:given-theta}
  The measure $\nu$ given by~(\ref{eq:def-nu}) is $\sigma$-finite, $\nu(\{\bszero\})=0$, $\nu$ is
  homogeneous and shift invariant.  The max-stable process $\bzeta$ defined by (\ref{eq:def-zeta})
  is stationary, has tail measure $\nu$, spectral tail process $\vbTheta$, extremal index
  $\vartheta>0$ and it admits the M3 representation
  \begin{align}
      \{\zeta_j,j\in\Zset\} \stackrel{d}= \{\bigvee_{i\geq1} P_i Q^{(i)}_{j-T_i}  , \ j\in\Zset\} \; ,  \label{eq:M3-Q}
  \end{align}
  where $\sum_{i=1}^\infty \delta_{P_i}$ is a Poisson point process on $(0,\infty)$ with mean
  measure $\alpha x^{-\alpha-1} \rmd x$, $\vbQ^{(i)}$, $i\geq1$ are \iid\ copies of the sequence
  $\vbQ$  and are independent of the previous point process
  and $\sum_{i=1}^\infty \delta_{T_i}$ is a point process on $\Zset$ with mean measure $\vartheta$
  times the counting measure.
\end{theorem}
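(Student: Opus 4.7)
First, I would check the structural properties of $\nu$. The property $\nu(\{\bszero\})=0$ holds because $|\Theta_0|=1$ a.s.\ forces $r\shift^j\vbTheta\neq\bszero$ for $r>0$. Homogeneity is immediate from the change of variable $r\mapsto r/c$ in the defining integral, and shift invariance is obtained by substituting $\vby\mapsto \shift^k\vby$ in $\nu(H)$ (which sends $\shift^j$ into $\shift^{j+k}$) and reindexing the sum over $j$. For $\sigma$-finiteness I would partition $\{|\vby_0|>\epsilon\}$ by the value of $I(\vby)$; shift invariance gives each piece the same $\nu$-mass, and applying $\nu$ to $\ind{|\vby_0|>1}$ reduces to $\sum_j\EE[|\Theta_j|^\alpha\ind{I(\vbTheta)=0}]=1$ by the argument in the proof of \Cref{theo:tail-measure-infargmax}.

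Next, stationarity of $\bzeta$ is an immediate consequence of the shift invariance of the Poisson intensity. The Poisson--Laplace formula gives
\[
\pr(\zeta_j\leq x)=\exp(-\nu(\{|\vby_j|>x\}))=\exp(-x^{-\alpha}),
\]
so the marginals are standard $\alpha$-Fr\'echet. That $\bzeta$ is regularly varying with tail measure $\nu$ is standard for max-stable processes: for $B\subset(\Rset^d)^{[s,t]}\setminus\{\bszero\}$ bounded away from $\bszero$, the probability that some $W^{(i)}$ satisfies $u^{-1}W^{(i)}_{s,t}\in B$ asymptotes to its mean $u^{-\alpha}\nu_{s,t}(B)$, which divided by $\pr(\zeta_0>u)\sim u^{-\alpha}$ gives vague convergence to $\nu_{s,t}$.

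To identify the spectral tail process of $\nu$ with $\vbTheta$, compute
\[
\sigma(A)=\nu(\{|\vby_0|>1,\;\vby/|\vby_0|\in A\})=\sum_k\EE\bigl[|\Theta_k|^\alpha\ind{|\Theta_k|^{-1}\vbTheta\in \shift^k A}\ind{I(\vbTheta)=0}\bigr]
\]
(substitute $k=-j$ in the definition of $\nu$ and rewrite $\shift^{-k}\vbTheta/|\Theta_k|\in A$ as $|\Theta_k|^{-1}\vbTheta\in\shift^k A$). Applying the time change formula~\eqref{eq:time-shift-Theta} to the $0$-homogeneous function $H(\vby)=\ind{\vby\in \shift^k A}\ind{I(\vby)=0}$, and using $I(\shift^k\vbTheta)=I(\vbTheta)+k$ together with $|\Theta_{-k}|\geq|\Theta_0|=1$ on $\{I(\vbTheta)=-k\}$, transforms the $k$-th summand into $\EE[\ind{\vbTheta\in A}\ind{I(\vbTheta)=-k}]$; summing over $k$ and invoking $\pr(I(\vbTheta)\in\Zset)=1$ yields $\sigma(A)=\pr(\vbTheta\in A)$.

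Positivity of $\vartheta$ follows from \Cref{lem:candidat>0} since $\lim|\Theta_j|=0$, and for stationary max-stable $\alpha$-Fr\'echet processes whose tail process tends to zero the candidate and true extremal indices agree by \cite{debicki:hashorva:2017}. For the M3 representation, it suffices to verify that the Poisson process $\sum_i\delta_{P_i\shift^{T_i}\vbQ^{(i)}}$ on $\ell_0$ has intensity $\nu$. Its intensity at a Borel set $A$ equals $\vartheta\sum_t\int_0^\infty\pr(r\shift^t\vbQ\in A)\,\alpha r^{-\alpha-1}\rmd r$, whereas by definition $\nu(A)=\sum_t\int_0^\infty\pr(r\shift^t\vbTheta\in A,\,I(\vbTheta)=0)\,\alpha r^{-\alpha-1}\rmd r$; the two agree because the function $\vby\mapsto\sum_t\ind{\shift^t\vby\in A/r}$ is shift invariant, so \Cref{prop:loi-Q} applies. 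The main obstacle is the identification of the spectral tail process, as the translation of $A$ by a shift requires careful bookkeeping when applying the time change formula.
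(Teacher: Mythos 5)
Your plan is correct and follows essentially the same route as the paper: the structural properties of $\nu$ are read off the definition, the time change formula is used to identify the restriction of $\nu$ to $\{\norm{\vby_0}>1\}$ with the law of $\vbY$ (your $\sigma(A)$ computation is exactly the paper's computation of $\nu(A\cap\{\norm{\vby_0}>1\})=\pr(\vbY\in A)$), the Poisson--Fr\'echet formula identifies the tail process of $\bzeta$, and the M3 representation is verified by matching the intensity of $\sum_i\delta_{P_i\shift^{T_i}\vbQ^{(i)}}$ with $\nu$ via \Cref{prop:loi-Q}.

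One small circularity to repair: for $\sigma$-finiteness you justify $\sum_j\EE[\norm{\vbTheta_j}^\alpha\ind{I(\vbTheta)=0}]=1$ ``by the argument in the proof of \Cref{theo:tail-measure-infargmax}'', but there that identity is \emph{deduced from} $\nu(\{\norm{\vby_0}>1\})=1$, which is precisely what you are trying to establish in the construction direction. The identity must instead be derived directly from the time change formula (apply (\ref{eq:time-shift-homo-0}) with $H(\vby)=\ind{I(\vby)=0}$ and $k=j$ to get $\EE[\norm{\vbTheta_j}^\alpha\ind{I(\vbTheta)=0}]=\pr(I(\vbTheta)=-j)$, then sum) — which is in effect what your later $\sigma(A)$ computation does when $A$ is the whole sphere, so the gap is cosmetic rather than substantive.
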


\begin{remark}
  It seems that the link between the sequence $\vbQ$ and the M3 representation was not known.
\end{remark}

\begin{proof}
  The fact that $\nu(\{\bszero\})=0$, the homogeneity and shift-invariance of $\nu$ are
  straightforward consequences of the definition. We prove that $\nu$ is $\sigma$-finite. In view of
  homogeneity and shift-invariance, it suffices to prove that $\nu(\{y_0>1\})<\infty$. For a
  measurable $A$, we have
  \begin{align*}
    \nu(A\cap\{y_0>1\}) 
    & =  \sum_{j\in\Zset} \int_0^\infty \pr(r\shift^j\vbTheta\in A, r(\shift^j\vbTheta)_0 >1,I(\vbTheta)=0) \alpha r^{-\alpha-1} \rmd r \\
    & =  \sum_{j\in\Zset} \esp \left[ \int_0^\infty \ind{r\shift^j\vbTheta\in A}\ind{r(\shift^j\vbTheta)_0 >1}\ind{I(\shift^j\vbTheta)=j} 
      \alpha r^{-\alpha-1} \rmd r \right] \; .
  \end{align*}
  The function
  $\vby \to \int_0^\infty \ind{r\vby\in A}\ind{ry_{0}>1}\ind{I(\vby)=j} \alpha
  r^{-\alpha-1} \rmd r $
  is $\alpha$-homogeneous and is equal to zero if $y_{0}=0$. Thus, applying the time change
  formula~(\ref{eq:time-shift-homo-alpha}) yields
  \begin{align*}
    \nu(A\cap\{y_0>1\}) 
    & =  \sum_{j\in\Zset} \esp \left[ \int_0^\infty \ind{r\vbTheta\in A}\ind{r\Theta_{0}>1}\ind{I(\vbTheta)=j} 
      \alpha r^{-\alpha-1} \rmd r \right] \\
    & = \esp \left[ \int_0^\infty \ind{r\vbTheta\in A}\ind{r\Theta_{0}>1} \alpha r^{-\alpha-1} \rmd r \right] \\
    & = \esp \left[ \int_1^\infty \ind{r\vbTheta\in A} \alpha r^{-\alpha-1} \rmd r \right] = \pr(\vbY \in A)  \; .
  \end{align*}
  Taking $A=\Rset^\Zset$ yields $\nu(\{y_0>1\})=1$.

  By \Cref{theo:tail-measure-infargmax}, to prove that $\nu$ is the tail measure of $\bzeta$, it
  suffices to prove that $\vbY$ is its tail process. By definition of $\bzeta$, we have, for
  $\vby\in[0,\infty]^\Zset$ with finitely many finite coordinates, as $u\to\infty$, 
    \begin{align*} \pr(\bzeta \in u [\bszero,\vby] \mid \zeta_0>u)
      & = \frac{\rme^{-u^{-\alpha}\nu([0,\vby]^c)}- \rme^{-u^{-\alpha}\nu(\{y_0>1\}\cup[0,\vby]^c)}}{1-\rme^{-u^{-\alpha}}} \\
      & \to  \nu(\{y_0>1\}\cup[0,\vby]^c) - \nu([0,\vby]^c) \\
      & =\nu(\{y_0>1\}\cap[0,\vby]) = \pr(\vbY\in[0,\vby]) \; .
  \end{align*}
  This proves that $\vbY$ is the tail process of $\bzeta$ and that $\nu$ is the tail measure of
  $\bzeta$.

  To prove (\ref{eq:M3-Q}), it suffices to note that for $\vbx\in(0,\infty]^\Zset$ with only
  finitely many finite coordinates, denoting $\xi$ the process in the right hand side
  of~(\ref{eq:M3-Q}), we have
  \begin{align*}
    - \log \pr(\xi_j\leq x_j,j\in\Zset) 
    & = \vartheta \sum_{i\in\Zset} 
      \int_0^\infty \pr\left(r\bigvee_{j\in\Zset} \frac{Q_{j-i}}{x_j} \leq 1\right) \alpha r^{-\alpha-1} \rmd r 
     =  \nu(\{\vby,y_j\leq x_j,j\in\Zset\}) \; .
  \end{align*}
\end{proof}

\paragraph{Acknowledgement} \Cref{sec:maxstableprocesses} owes a lot to Enkelejd Hashorva who
brought the references \cite{hashorva:2016,debicki:hashorva:2017} to our attention as well as the
formula~(\ref{eq:candidate-hashorva}). The research of the first author is supported in part by Croatian
Science Foundation under the project 3526. The research of the second author is partially supported by
LABEX MME-DII.

\appendix

\section*{Appendix}

\section{Proof of the equivalence between (\ref{eq:time-shift-Y})
  and~(\ref{eq:time-shift-Theta})}

Assume first that (\ref{eq:time-shift-Y}) holds. It suffices to prove (\ref{eq:time-shift-Theta})
for a non negative measurable functional~$H$, homogeneous with degree~0. Applying~(\ref{eq:polar})
and  the monotone convergence theorem, we obtain
  \begin{align*}
    \lim_{t\to0} \esp [ H(B^k\vbY) \ind{|\vbY_{-k}|>t}] 
    & = \lim_{t\to 0}\esp [H(B^k\vbTheta) \ind{|\vbY_{-k}|>t}]  \\
    & = \esp [H(B^k\vbTheta) \ind{|\vbTheta_{-k}|>0}]  \; .
  \end{align*}
  On the other hand, applying again~(\ref{eq:polar}) and the monotone convergence theorem yields
  \begin{align*}
    \lim_{t\to0}    t^{-\alpha} \esp [ H(t\vbY) \ind{|\vbY_{k}|>1/t}]
    & = \lim_{t\to0} t^{-\alpha} \esp \left[ H(\vbTheta) \int_1^\infty \ind{r\norm{\vbTheta_{k}}>1/t} \alpha r^{-\alpha-1}\rmd r \right]  \\
    & = \lim_{t\to0}  \esp \left[ H(\vbTheta) \left(\norm{\vbTheta_{k}} \wedge 1/t\right)^\alpha \right] \\
    & = \esp \left[ H(\vbTheta) \norm{\vbTheta_{k}} ^\alpha \right] \; .
  \end{align*}
  Since we started from quantities which are equal by (\ref{eq:time-shift-Y}), this
  proves~(\ref{eq:time-shift-Theta}) for a $0$-homogeneous functional.

  Conversely, assume that (\ref{eq:time-shift-Theta}) holds and let $H$ be a non negative measurable
  functional and $t>0$. Then by~(\ref{eq:polaire-tail-measure-0})
  \begin{align*}
    \esp & [ H(B^k\vbY)  \ind{|\vbY_{-k}|>t}]  \\
         & = \int_1^\infty \esp [ H(rB^k\vbTheta) \ind{r\norm{\vbTheta_{-k}}>t}] \alpha r^{-\alpha-1} \rmd r \\
         & = \int_1^\infty \esp [ H(rB^k\vbTheta) \ind{r\norm{\left(B^k\vbTheta\right)_{0}}>t} 
           \ind{\norm{\vbTheta_{-k}}\ne0}] \alpha r^{-\alpha-1} \rmd r \\
         & = \int_1^\infty \esp [ H(r\norm{\vbTheta_k}^{-1}\vbTheta) \ind{r\norm{\vbTheta_k}^{-1}\norm{\vbTheta_{0}}>t}
           \norm{\vbTheta_k}^\alpha] \alpha r^{-\alpha-1} \rmd r \\
         & = t^{-\alpha} \esp \left[ \int_1^\infty  H(tu\vbTheta) \ind{u\norm{\vbTheta_k}>1/t}   \alpha u^{-\alpha-1} \rmd u \right]
           = t^{-\alpha} \esp \left[H(t\vbY) \ind{\norm{\vbY_k}>1/t} \right]   \; ,
  \end{align*}
  where the last line was obtained by the change of variable $u\norm{\vbTheta_k}t=r$. Thus
  (\ref{eq:time-shift-Y}) holds.

\section{Proof of~(\ref{eq:borne-diff-slogs})}
\label{sec:properties-slogs}

Let $g$ be defined on $\Rset$ by $g(x) = x\log(|x|)$ with the convention $0\log 0 =0$.  Then
$|g(x)|\leq 1$ for $x\in[-1,1]$ and if $|x|\vee|y| \geq 1$ and $|x-y|\leq1$, which implies that $x$
and $y$ are of the same sign and $||x|-|y||=|x-y|\leq1$, we have
\begin{align*}
   |g(x)-g(y)| = \int_{|x|\wedge|y|y}^{|x|\vee|y|} \{1+\log(s)\}\rmd s \leq ||x|-|y||(\log(|x|\vee|y|)+1)\leq \log(|x|\vee |y|)+1 \; . 
\end{align*}
For $\vbx,\vby\in\ell_1$ such that $|S(\vbx)-S(\vby)|\leq1$, this yields
\begin{align*}
  |g(S(\vbx))& -g(S(\vby))|  \\
  & \leq 2 \cdot \ind{|S(\vbx)|\vee |S(\vby)| \leq1} 
   + (\log(\norm[1]{\vbx}\vee\norm[1]{\vby}) +1)\ind{|S(\vbx)|\vee |S(\vby)| \geq 1} \\
  &  \leq 2 + \log_+(\norm[1]{\vbx}\vee\norm[1]{\vby})  \; .
\end{align*}

\end{document}